\documentclass{amsart}
\linespread{1.15}

\usepackage{tikz}
\usetikzlibrary{fadings,arrows}

\usepackage{amsmath, amssymb, mathrsfs, verbatim, multirow}
\usepackage[all]{xy}
\usepackage{pifont}
\usepackage{float}

\newtheorem{Teo}{Theorem}[section]
\newtheorem{Prop}[Teo]{Proposition}
\newtheorem{Lema}[Teo]{Lemma}
\newtheorem{Cor}[Teo]{Corollary}

\theoremstyle{definition}
\newtheorem{Def}[Teo]{Definition}

\newtheorem{Obs}[Teo]{Remark}

\newcommand{\N}{\mathbb{N}}

\newcommand{\Llr}{\Longleftrightarrow}
\newcommand{\lra}{\longrightarrow}

\newcommand{\VR}{\mathcal{O}}

\newcommand{\MI}{\mathfrak{m}}

\newcommand{\ws}{\mbox{\rm wsup}}
\newcommand{\wl}{\mbox{\rm wlim}}

\begin{document}
\title[K\"ahler differentials]{The module of K\"ahler differentials for extensions of valuation rings}

\author{Josnei Novacoski}
\address{Departamento de Matem\'{a}tica,         Universidade Federal de S\~ao Carlos, Rod. Washington Luís, 235, 13565--905, S\~ao Carlos -SP, Brazil}
\email{josnei@ufscar.br}
\author{Mark Spivakovsky}
\address{CNRS UMR 5219 and Institut de Mathématiques de Toulouse, 118, rte de Narbonne, 31062 Toulouse cedex 9, France}
\email{mark.spivakovsky@math.univ-toulouse.fr}

\thanks{During the realization of this project the authors were supported by two grants from Funda\c{c}\~ao de Amparo \`a Pesquisa do Estado de S\~ao Paulo (process numbers 2017/17835-9 and 2022/14876-4).}
\keywords{Key polynomials, K\"ahler differentials, the defect}
\subjclass[2010]{Primary 13A18}

\begin{abstract}
The main goal of this paper is to characterize the module of K\"ahler differentials for an extension of valuation rings. More precisely, we consider a simple algebraic valued field extension $(L/K,v)$ and the corresponding valuation rings $\VR_L$ and $\VR_K$. In the case when $e(L/K,v)=1$ we present a characterization for $\Omega_{\VR_L/\VR_K}$ in terms of a given sequence of key polynomials for the extension. Moreover, we use our main result to present a characterization for when $\Omega_{\VR_L/\VR_K}=\{0\}$.
\end{abstract}

\maketitle
\section{Introduction}
Let $(L/K,v)$ be a simple algebraic extension of valued fields and denote by $\VR_L$ and $\VR_K$ the corresponding valuation rings. The main purpose of this paper is to present a simple characterization for the module of K\"ahler differentials of the ring extension
$\VR_L/\VR_K$.

The first step is to present a set of generators for this extension. This is done using complete sets (defined below). Let $L=K(\eta)$ and denote by $g$ the minimal polynomial of $\eta$ over $K$.

Let $q,f\in K[x]$ be two polynomials with $\deg(q)\geq 1$. There exist (uniquely determined) $f_0,\ldots,f_n\in K[x]$ with
$\deg(f_i)<\deg(q)$ for every $i$, $0\leq i\leq n$, such that
\[
f=f_0+f_1q+\ldots+f_nq^n.
\]
This expression is called the \textbf{$q$-expansion of $f$}. If $f_n=1$ in the above expansion, then we say that $f$ is \textbf{$q$-monic}. We consider the valuation $\nu$ on $K[x]$ with support $(g)$ defined by $v$. This valuation can be given explicitly using
$g$-expansions. Namely,
\[
\nu(f_0+f_1g+\ldots+f_ng^n):=v(f_0(\eta)).
\]
For a monic polynomial $q$ with $\deg(q)\geq 1$, the \textbf{truncation of $\nu$ at $q$} is defined as
\[
\nu_q(f)=\min\left\{\nu\left(f_iq^i\right)\right\},
\]
where $f=f_0+f_1q+\ldots+f_nq^n$ is the $q$-expansion of $f$. For a set $\textbf{Q}\subseteq K[x]$ we denote by $\N^\textbf{Q}$ the set of mappings $\lambda:\textbf{Q}\lra \N$ such that $\lambda(q)=0$ for all but finitely many $q\in\textbf{Q}$ (here $\N$ denotes the set of natural numbers, in which we include $0$). For $\lambda\in \N^\textbf{Q}$ we denote
\[
\textbf{Q}^\lambda:=\prod_{q\in\textbf{Q}}q^{\lambda(q)}\in K[x].
\]
For two subsets $\textbf{Q}$ and $\textbf{Q}'$ of $K[x]$ we will say that they are $K${\bf-proportional} if there exists a bijection
$\phi:\textbf{Q}\lra\textbf{Q}'$ such that for every $q\in \textbf Q$ we have $\phi(q)=aq$ for some $a\in K$ (the element $a$ may depend on $q$).

A set $\textbf{Q}\subseteq K[x]\setminus K$ is called a {\bf complete} set for $\nu$ if for every $f\in K[x]$ there exists $q\in \textbf{Q}$ such that
\begin{equation}\label{eqabotdegpol}
\deg(q)\leq\deg(f)\mbox{ and }\nu(f)=\nu_q(f).
\end{equation}

Let $e(L/K,v)$ denote the ramification index of the extension $(L/K,v)$, that is,
\[
e(L/K,v)=(vL:vK).
\]
We prove the following (which follows from Proposition \ref{Corfinitelgenalg}).

\begin{Prop}
Let $L=K(\eta)=K[x]/(g)$ and take a valuation $v$ on $L$ such that $e(L/K,v)=1$. Consider the valuation $\nu$ on $K[x]$ with support $(g)$ defined by $v$ and a complete set $\textbf{Q}$ for $\nu$. Then there exists a complete set of polynomials $\tilde{\textbf{Q}}$ for $\nu$, $K$-proportional to $\textbf{Q}$, such that
\[
\VR_L=\VR_K\left[q(\eta)\ \left|\ q\in \tilde{\textbf{Q}}\right.\right].
\]
\end{Prop}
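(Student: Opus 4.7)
The idea is first to normalize $\textbf{Q}$ so that each $q(\eta)$ has valuation zero, and then generate $\VR_L$ from these normalized elements by induction on degree, using completeness at each step.

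For the normalization, the hypothesis $e(L/K,v)=1$ gives $vL=vK$, so for every $q\in\textbf{Q}$ we may choose $a_q\in K^\times$ with $v(a_q)=-v(q(\eta))=-\nu(q)$ and set $\tilde q:=a_q q$; the scalars can be arranged so that $q\mapsto\tilde q$ is a bijection (automatic when $\textbf{Q}$ consists of monic polynomials, as for key polynomials). The resulting set $\tilde{\textbf{Q}}$ is $K$-proportional to $\textbf{Q}$ and satisfies $\nu(\tilde q)=0$ for every $\tilde q$. It remains complete: the $\tilde q$-expansion of any $f$ is obtained from its $q$-expansion $f=\sum_i f_iq^i$ as $f=\sum_i(a_q^{-i}f_i)\tilde q^i$, whose coefficients still have degree $<\deg(\tilde q)$, and a direct computation gives $\nu_{\tilde q}(f)=\min_i\nu(a_q^{-i}f_i\tilde q^i)=\min_i\nu(f_iq^i)=\nu_q(f)$; combined with $\deg(\tilde q)=\deg(q)$, this transfers completeness from $\textbf{Q}$ to $\tilde{\textbf{Q}}$.

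For the ring equality, the inclusion $\VR_K[\tilde q(\eta)\mid q\in\tilde{\textbf{Q}}]\subseteq\VR_L$ is clear from $\nu(\tilde q)=0$. For the reverse inclusion, represent $y\in\VR_L$ as $y=f(\eta)$ with $f\in K[x]$ of degree less than $\deg(g)$ and argue by induction on $\deg(f)$. The base case $\deg(f)=0$ reduces to $y\in K\cap\VR_L=\VR_K$. For the inductive step, completeness of $\tilde{\textbf{Q}}$ supplies $\tilde q\in\tilde{\textbf{Q}}$ with $\deg(\tilde q)\leq\deg(f)$ and $\nu_{\tilde q}(f)=\nu(f)\geq 0$; in the $\tilde q$-expansion $f=\sum_i\tilde f_i\tilde q^i$ every summand satisfies $\nu(\tilde f_i\tilde q^i)\geq 0$, and $\nu(\tilde q)=0$ forces $\nu(\tilde f_i)\geq 0$, so $\tilde f_i(\eta)\in\VR_L$. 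Since $\deg(\tilde f_i)<\deg(\tilde q)\leq\deg(f)$, the inductive hypothesis applies to each $\tilde f_i$ and gives $y=\sum_i\tilde f_i(\eta)\,\tilde q(\eta)^i\in\VR_K[\tilde q(\eta)\mid q\in\tilde{\textbf{Q}}]$, completing the induction.

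No single step is a serious obstacle; the arguments are elementary. The delicate bookkeeping is confined to verifying that truncation behaves well under scaling of $q$ by an element of $K^\times$, and to ensuring that the inductive degree drop is strict, which is exactly the condition $\deg(\tilde f_i)<\deg(\tilde q)$ built into the definition of the $\tilde q$-expansion. The hypothesis $e(L/K,v)=1$ enters only at the normalization step, guaranteeing that the rescaling factors $a_q$ can be chosen inside $K$.
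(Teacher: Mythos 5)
Your proposal is correct and takes essentially the same route as the paper: use $e(L/K,v)=1$ to rescale each $q$ by an element of $K$ so that the resulting $K$-proportional set has value zero (the paper's Lemma \ref{lemaparasetpol} and its Corollary, where completeness is preserved because truncations are invariant under such scaling), then induct on degree via $q$-expansions and evaluate at $\eta$ (the paper's Theorem \ref{Theoremkeypol} combined with Proposition \ref{Corfinitelgenalg}). The only difference is cosmetic: you prove the membership $f(\eta)\in\VR_K\left[\tilde{\textbf{Q}}(\eta)\right]$ directly, whereas the paper's induction carries the slightly stronger statement that $f=\sum_i a_i\tilde{\textbf{Q}}^{\lambda_i}$ with $a_i\in\VR_K$ and $\nu(f)=\min_i v(a_i)$, which is not needed for this Proposition.
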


In this paper we will assume that $e(L/K,v)=1$. However, the above result can be proved in a more general setting. We present this generalization for the sake of completeness (Section \ref{epsilondelta}).

As a particular case, we can use complete sequences of key polynomials to obtain sets of generators of $\VR_L$ over $\VR_K$. Let  $\overline K$ denote the algebraic closure of $K$. Fix an extension $\overline \nu$ of $\nu$ to $\overline K[x]$. For each $f\in K[x]\setminus K$ we define
\[
\epsilon(f):=\max\{\overline \nu(x-a)\mid a\mbox{ is a root of }f\}.
\]
A monic polynomial $Q\in K[x]\setminus K$ is called \textbf{a key polynomial for $\nu$} if $\epsilon(f)<\epsilon(Q)$ for every $f\in K[x]\setminus K$ with $\deg(f)<\deg(Q)$.

Take a sequence of key polynomials $\textbf{Q}=\{Q_i\}_{i\in I}$ for $\nu$.  This means that every element $Q_i$ is a key polynomial for $\nu$, the set $I$ is well ordered, the map $i\mapsto \epsilon(Q_i)$ is increasing (not necessarily strictly) and the set $\{Q_i\}_{i\in I}$ is a complete set for $\nu$. In particular, $g$ is the last element of $\textbf{Q}$. The existence of such sequences of key polynomials is proved, for example, in \cite{Novspivkeypol}. We will denote by $i_{\rm max}$ the last element of $I$, $g=Q_{i_{\rm max}}$ and $I^*=I\setminus\{i_{\rm max}\}$. For each $i\in I$ denote by $\nu_i$ the truncation of $\nu$ at $Q_i$.

For a subset $\gamma\subset vL$ let $\alpha(\gamma)$ denote the smallest final segment of $vL$ containing $\gamma$; let
\[
I_\gamma:=\{y\in L\ |\ v(y)\ge a\text{ for some }a\in\gamma\}.
\]
The set $I_\gamma$ is a fractional ideal in $L$ (this is the same as saying that it is an $\mathcal O_L$-submodule of $L$).

We set
\[
\alpha_i=\nu(Q_i')-\nu(Q_i), \ \beta_i=\nu(g')-\nu_i(g)\mbox{ and }\tilde{\beta}_i=\nu_i(g')-\nu_i(g).
\]
Here $f'$ denotes the formal derivative of a polynomial $f$. Set
\[
\alpha=\alpha\left(\left\{\left.\alpha_i\ \right|\ i\in I\right\}\right)\mbox{ and
}\beta=\alpha\left(\left\{\left.\beta_i\ \right|\ i\in I\right\}\right).
\]

Given a complete set $\textbf{Q}$ of key polynomials for $\nu$, an element $f\in K[x]$ and an index $i\in I$, we define, at the beginning of Section \ref{Characteri}, the notion of the \textbf{full $i$-th expansion of $f$} (see equation (\ref{expiadicf}) below). Further, the finite set $I_0(f,i)$ is defined to be the set of all the elements $\ell\in I$ such that $Q_\ell$ appears in the full $i$-th expansion of $f$ (see equation (\ref{eq:I0fi}) below). We show, using the classical Leibniz rule, that
\[
\nu_i(f')-\nu_i(f)\ge\min\limits_{\ell\in I_0(f,i)}\left\{\nu\left(Q'_\ell\right)-\nu(Q_\ell)\right\}
\]
(Proposition \ref{lowerbound} below). We use this to show that $I_\beta\subseteq I_\alpha$.
 Our main result is the following.
 
\begin{Teo}\label{mainthm}
Suppose that $Q_\ell$ is $Q_i$-monic for every $i,\ell\in I$, $i<\ell$. Then
\[
\Omega_{\VR_L/\VR_K}\simeq I_\alpha/I_\beta
\]
as $\VR_L$-modules. In particular, $\Omega_{\VR_L/\VR_K}=\{0\}$ if and only if $\alpha=\beta$.
\end{Teo}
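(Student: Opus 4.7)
My plan is to construct a $\VR_K$-derivation $D \colon \VR_L \to I_\alpha/I_\beta$, use the universal property of K\"ahler differentials to induce an $\VR_L$-linear map $\varphi \colon \Omega_{\VR_L/\VR_K} \to I_\alpha/I_\beta$, and then show $\varphi$ is an isomorphism. To define $D$, I send $z = h(\eta) \in \VR_L$, represented by the unique polynomial $h \in K[x]$ of degree $< \deg g$, to $h'(\eta) + I_\beta$. The image lies in $I_\alpha$: completeness of $\textbf{Q}$ provides $i \in I^*$ with $\nu_i(h) = \nu(h) \geq 0$ (such $i$ exists in $I^*$ because $\deg h < \deg Q_{i_{\max}}$), and Proposition \ref{lowerbound} then yields $v(h'(\eta)) = \nu(h') \geq \nu_i(h') \geq \nu_i(h) + \alpha_\ell$ for some $\ell \in I_0(h,i) \subseteq I^*$, so $h'(\eta) \in I_\alpha$. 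The $\VR_K$-linearity of $D$ is immediate, and the Leibniz rule reduces, via Euclidean division $h_1 h_2 = qg + r$ with $\deg r < \deg g$, to showing that the error term $q(\eta) g'(\eta)$ lies in $I_\beta$; this follows by choosing $i$ large enough so that $\nu_i(g) + v(q(\eta)) \geq 0$, which gives $v(q(\eta) g'(\eta)) = \nu(g') + v(q(\eta)) \geq \nu(g') - \nu_i(g) = \beta_i$.

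Surjectivity of $\varphi$ is then immediate from the Proposition stated in the introduction: writing $\tilde Q_i = a_i Q_i$ with $a_i \in K$, the classes $\{d(\tilde Q_i(\eta))\}_{i \in I^*}$ generate $\Omega_{\VR_L/\VR_K}$ and $\varphi(d(\tilde Q_i(\eta))) = a_i Q_i'(\eta) + I_\beta$. For $y \in I_\alpha$ with $v(y) \geq \alpha_i$, the element $c := y/(a_i Q_i'(\eta))$ lies in $\VR_L$, and $\varphi(c \cdot d(\tilde Q_i(\eta))) = y + I_\beta$, so $\varphi$ is onto.

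Injectivity is the real challenge. My strategy is to construct an inverse $\psi \colon I_\alpha/I_\beta \to \Omega_{\VR_L/\VR_K}$ by $\psi(y + I_\beta) := (y/(a_i Q_i'(\eta))) \cdot d(\tilde Q_i(\eta))$ for any admissible $i$ (that is, $v(y) \geq \alpha_i$), and then check (i) independence of the choice of $i$ and (ii) vanishing on $I_\beta$. For (i), given two admissible indices $i < j$, the hypothesis that $Q_j$ is $Q_i$-monic lets me write the $Q_i$-expansion $Q_j = Q_i^{n_{ij}} + c_{n_{ij}-1} Q_i^{n_{ij}-1} + \cdots + c_0$ with $\deg c_k < \deg Q_i$, and then recursively expand each $c_k(\eta)$ in the lower generators through its full $i$-th expansion. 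Applying the Leibniz rule decomposes $d(\tilde Q_j(\eta))$ as a principal term $(a_j Q_j'(\eta)/(a_i Q_i'(\eta))) \cdot d(\tilde Q_i(\eta))$ plus tail terms in the $d(\tilde Q_\ell(\eta))$ with $\ell < i$, whose contributions to $\psi(y)$, controlled by the valuation estimates of Proposition \ref{lowerbound}, fall into $I_\beta$. For (ii), the relation $d(g(\eta)) = 0$, expanded through the full $i$-th expansion of $g$, supplies exactly the identity that forces $\psi(y) = 0$ when $y \in I_\beta$. The main obstacle is (i): executing this recursive comparison cleanly, where the $Q_i$-monic hypothesis is indispensable and Proposition \ref{lowerbound} provides the essential error bounds. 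Once $\psi$ is well-defined, $\varphi \circ \psi = \mathrm{id}$ and $\psi \circ \varphi = \mathrm{id}$ follow on generators. The ``in particular'' statement is then immediate, since $\Omega_{\VR_L/\VR_K} = \{0\}$ if and only if $I_\alpha = I_\beta$, which is equivalent to $\alpha = \beta$ as final segments of $vL$.
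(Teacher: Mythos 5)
Your forward map is essentially the paper's $\Psi$: the well-definedness of your derivation $D$ on products is the content of Lemma \ref{Propcrucial}, and your surjectivity argument is the same as the paper's. Even there, though, one step is asserted rather than proved: in the Leibniz verification you ``choose $i$ large enough so that $\nu_i(g)+v(q(\eta))\ge 0$''. The values $\nu_i(g)$ are bounded above precisely in the defect situations this paper is aimed at (in the Artin--Schreier example $\nu_n(g)=v(a)/p^{n-1}<0$ for all $n$), so the existence of such an $i$ is not a matter of going far enough in $I$; you must prove that $v(q(\eta))\ge-\nu_i(g)$ for some $i$. This is true but needs an argument, e.g.: pick $i\in I^*$ with $\nu_i(h_1)=\nu(h_1)$, $\nu_i(h_2)=\nu(h_2)$ and $\nu_i(r)=\nu(r)$ (possible since all three have degree $<\deg g$), and use that the truncation $\nu_i$ is a valuation to get $\nu(q)+\nu_i(g)\ge\nu_i(q)+\nu_i(g)=\nu_i(h_1h_2-r)\ge\min\{\nu(h_1)+\nu(h_2),\nu(r)\}\ge0$.

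The genuine gap is injectivity, which is where all the difficulty of the theorem sits, and your proposal only states a plan for it --- you yourself flag that the ``recursive comparison'' in step (i) is not executed. To show that $\psi$ is independent of the admissible index and kills $I_\beta$, you must verify identities inside $\Omega_{\VR_L/\VR_K}$, and that requires controlling the module of relations among the generators $\tilde Q_i(\eta)$: this is exactly what the paper's Lemma \ref{generatorsCalI} provides ($\mathcal I=\mathcal I_1+\mathcal I_2$, proved via total reductions and buildings, and it is there that the $Q_i$-monic hypothesis is actually used). Without such a statement, checking $\psi\circ\varphi=\mathrm{id}$ ``on generators'' is circular, since a priori further relations could make $\varphi$ non-injective; and conversely the well-definedness of $\psi$ requires the valuation estimates you only allude to. Two concrete points where your sketch is off: the claimed decomposition of $d(\tilde Q_j(\eta))$ with principal term $\bigl(a_jQ_j'(\eta)/(a_iQ_i'(\eta))\bigr)\,d(\tilde Q_i(\eta))$ is not what the chain rule applied to the full $i$-th expansion gives --- the coefficient of $d(\tilde Q_i(\eta))$ is the partial derivative of the expansion with respect to $Q_i$ evaluated at $\eta$, which differs from $a_jQ_j'(\eta)/(a_iQ_i'(\eta))$ exactly by the tail terms you want to estimate; and the expansion identities are relations over $\VR_K$ only after scaling by the constants $b_{\ell i}$ of positive value (as in $\mathcal I_1$, $\mathcal I_2$), a divisibility issue your sketch ignores when it manipulates the $d(\tilde Q_\ell(\eta))$ freely. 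As written, the proposal proves surjectivity of a correctly chosen map (modulo the fixable point above) but not the isomorphism.
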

\begin{Obs}
The assumption that $Q_\ell$ is $Q_i$-monic for every $i,\ell\in I$, $i<\ell$, is not necessary but makes the computations easier. The general case will be treated in future work. This condition is always satisfied if for instance $(K,v)$ is henselian or ${\rm rk}(v)=1$.
\end{Obs}

We use Theorem \ref{mainthm} to characterize when $\Omega_{\VR_L/\VR_K}=\{0\}$. Let $\Gamma$ be an ordered abelian group and
$\Lambda$ and $\Delta$ subsets of $\Gamma$. We denote
\[
\Lambda-\Delta=\{\lambda-\delta\mid \lambda\in\Lambda\mbox{ and }\delta\in \Delta\}.
\]
We are interested in the case where $\Delta$ is an isolated subgroup, $\Lambda$ is a final segment of $\Gamma$ and $\Lambda-\Delta=\Lambda$. This means that for every $\lambda\in \Lambda$ and $\delta\in \Delta$, there exists $\lambda'\in \Lambda$ such that
\[
\lambda'\leq \lambda-\delta.
\]
\begin{Def} Consider an isolated subgroup $\Delta\subseteq \Gamma$, an element $\gamma\in vL$, a subset $I_1\subset I^*$ and a subset $S=\{s_i\}_{i\in I_1}\subset vL$. We say that
$\gamma=\wl_{\Delta}\ S$ (where ``$\wl$" stands for ``{\bf weak limit}") if one of the following conditions holds:
\begin{enumerate}
\item the set
\[
\overline S:=\{s_i+\Delta\in\Gamma/\Delta\mid i\in I_1\}
\]
does not have a minimal element in $\Gamma/\Delta$ and for every $\epsilon\in vL$, $\epsilon>\Delta$, there exists $i_0\in I_1$ such that for all $i\in I_1$, $i>i_0$, we have $|\gamma-s_i|<\epsilon$,
\item the set $\overline S$ contains a minimal element $s_j+\Delta$, $\gamma\in s_j+\Delta$, and there exists $i_0\in I_1$ such that for all $i\in I_1$, $i>i_0$, we have $ s_i+\Delta=s_j+\Delta$.
\end{enumerate}
\end{Def}

\begin{Teo}\label{charatheorem}
Let $\Delta$ be the largest isolated subgroup of $vL$ for which $\alpha-\Delta=\alpha$. We have $I_\alpha=I_\beta$ if and only if one of the following conditions is satisfied.
\begin{description}
\item[(i)] The segment $\alpha$ contains a minimal element and all of the following conditions hold:
\begin{itemize}
\item[(a)] the set $I^*$ contains a maximal element $i$,
\item[(b)] $\tilde{\beta}_i\le\beta_\ell$ for all $\ell\in I^*$, and
\item[(c)] there exists $\ell\in I_0(g,i)$ satisfying
\[
\min\,\alpha=\alpha_\ell=\tilde{\beta}_i.
\]
\end{itemize}
\item[(ii)] The segment $\alpha$ does not have a minimal element and there exists a subset $I'\subset I$ such that
\begin{itemize}
\item[(a)] for all $i_0\in I'$ we have $\alpha=\alpha\left(\left\{\left.\tilde{\beta}_i\
\right|\ i\in I ', i\ge i_0\right\}\right)$ and
\item[(b)] $\nu(g')=\wl_{\Delta}\,\left\{\left.\nu_i\left(g'\right)\ \right|\ i\in I'\right\}$.
\end{itemize}
\end{description}
\end{Teo}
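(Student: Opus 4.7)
The plan is to first observe that the inclusion of fractional ideals $I_\alpha\supseteq I_\beta$, which is Proposition~\ref{lowerbound} applied to $f=g$, is equivalent to the inclusion of final segments $\beta\subseteq\alpha$, and that equality of final segments $\alpha=\beta$ is equivalent to $I_\alpha=I_\beta$ (since any final segment $\gamma\subseteq vL$ is recovered from $I_\gamma$ as the set of values $v(y)$, $0\ne y\in I_\gamma$). So the whole content of the theorem is to characterise when $\alpha\subseteq\beta$. I split the analysis according to whether $\alpha$ has a minimum.

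Case (i): $\alpha$ has a minimum $\gamma_0$. Since $\alpha$ is generated as a final segment by $\{\alpha_\ell\}$, some $\alpha_{\ell^*}=\gamma_0$. For $(\Leftarrow)$, let $i=\max I^*$ given by (a); combining $\tilde\beta_i\le\beta_\ell$ of (b) with the Leibniz lower bound $\tilde\beta_i\ge\min_{\ell\in I_0(g,i)}\alpha_\ell$ and the equality in (c) forces $\tilde\beta_i=\gamma_0$. Using that $\nu_\ell(g)$ is non-decreasing in $\ell$ along a complete sequence of key polynomials, (b) forces $\beta_i=\gamma_0=\min\beta$, hence $\beta=\alpha$. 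For $(\Rightarrow)$, equality $\alpha=\beta$ with minimum $\gamma_0$ provides some $\beta_{i_0}=\gamma_0$; the sandwich $\gamma_0\le\tilde\beta_{i_0}\le\beta_{i_0}=\gamma_0$ gives $\tilde\beta_{i_0}=\gamma_0$ and hence $\nu_{i_0}(g')=\nu(g')$. For any $j>i_0$ in $I^*$, the monotonicity of $\nu_\ell(g)$ combined with $\beta_j\ge\gamma_0$ forces $\nu_j(g)=\nu_{i_0}(g)$, so $\nu_\ell(g)$ stabilises on the tail of $I^*$; but completeness of $\{Q_\ell\}$ forces $\nu_\ell(g)\to\nu(g)=\infty$, so that tail must be empty, giving (a). Conditions (b) and (c) follow from $\tilde\beta_{i_0}=\gamma_0$ and the equality case of the Leibniz bound.

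Case (ii): $\alpha$ has no minimum. The key identity $\beta_i-\tilde\beta_i=\nu(g')-\nu_i(g')$ exhibits $\beta_i$ as a shift of $\tilde\beta_i$ by the non-negative error $\nu(g')-\nu_i(g')$. For $(\Leftarrow)$, (a) says the tail $\{\tilde\beta_i\mid i\in I',\,i\ge i_0\}$ generates $\alpha$, and (b) forces this error to lie in $\Delta$ for $i$ sufficiently large in $I'$, handling both clauses of $\wl_\Delta$: if $\overline{\{\nu_i(g')\}}$ has a minimal class in $vL/\Delta$, eventually $\nu_i(g')\equiv\nu(g')\pmod\Delta$; otherwise $|\nu(g')-\nu_i(g')|$ becomes arbitrarily small above $\Delta$, still lying in $\Delta$ in the limit sense. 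Since $\alpha-\Delta=\alpha$, the $\beta_i$ then generate the same final segment as the $\tilde\beta_i$ on the tail, namely $\alpha$. For $(\Rightarrow)$, starting from $\alpha=\beta$ one uses $\tilde\beta_i\le\beta_i$ together with $\alpha(\{\tilde\beta_i\}_{i\in I})\subseteq\alpha$ to conclude $\alpha(\{\tilde\beta_i\}_{i\in I})=\alpha$, and then extracts a cofinal $I'\subseteq I$ along which the $\tilde\beta_i$ are cofinal in $\alpha$, giving (a); condition (b) is read off by comparing $\nu_i(g')$ with $\nu(g')$ in $vL/\Delta$, according to whether the image of $\{\nu_i(g')\mid i\in I'\}$ admits a minimum.

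The main obstacle will be the implication $(\Rightarrow)$ in case (i), specifically forcing $I^*$ to contain a maximum: this requires combining the monotonicity of $\nu_\ell(g)$ with the fact that a complete sequence of key polynomials realises $\nu(g)=\infty$ in the limit, in order to rule out a cofinal $I^*$ on which $\nu_\ell(g)$ has stabilised. A second delicate point, in case (ii) $(\Rightarrow)$, is choosing the cofinal $I'$ and matching the two alternatives in the definition of $\wl_\Delta$ to the behaviour of $\beta$ modulo $\Delta$, especially when the quotient image $\{\nu_i(g')+\Delta\}$ fails to admit a minimal class.
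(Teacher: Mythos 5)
Your opening reduction ($I_\alpha=I_\beta\Leftrightarrow\alpha=\beta$, since a final segment of $vL$ is recovered from its fractional ideal) and the overall case split agree with the paper, and your sketch of case (ii)($\Leftarrow$) is in the right spirit. The genuine problem is the step you yourself flag as the crux, (i)($\Rightarrow$): your argument that $I^*$ has a maximal element rests on the claim that ``completeness of $\{Q_\ell\}$ forces $\nu_\ell(g)\to\nu(g)=\infty$'', and that claim is false. Completeness only says that for each $f$ some $q\in\textbf{Q}$ with $\deg(q)\le\deg(f)$ computes $\nu(f)$; for $f=g$ that element is $g=Q_{i_{\rm max}}$ itself, and the truncations $\nu_\ell(g)$, $\ell\in I^*$, are in general bounded above --- in the paper's own Artin--Schreier example $\nu_n(g)=v(a)/p^{n-1}$, which increases to $0$, not to $\infty$. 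So the contradiction ``constant tail versus $\nu_\ell(g)\to\infty$'' does not exist and (a) is not obtained. The paper's route is different: it first produces an $i$ with $\tilde\beta_i=\min\alpha$ \emph{and} $\nu_i(g')=\nu(g')$ (this needs its own argument --- if every $i$ with $\tilde\beta_i=\min\alpha$ had $\nu_i(g')<\nu(g')$, then no $\beta_j$ could equal $\min\alpha$ and $\beta\ne\alpha$), and then uses the \emph{strict} increase $\nu_i(g)<\nu_\ell(g)$ for $i<\ell$: any $\ell>i$ in $I^*$ would give $\beta_\ell=\nu(g')-\nu_\ell(g)<\beta_i=\min\alpha$, a contradiction, so $i=\max I^*$. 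Your derivation that $\nu_j(g)$ is constant on the tail is in fact incompatible with this strict monotonicity, but you never invoke it, and the substitute you do invoke is wrong.

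There are two further gaps. In (i)($\Leftarrow$) you assert that (b) together with monotonicity of $\nu_\ell(g)$ forces $\beta_i=\gamma_0$; but this needs $\nu_i(g')=\nu(g')$, which follows neither from (b) nor from monotonicity of $\nu_\ell(g)$ --- it follows from completeness applied to $g'$: since $\deg(g')<\deg(g)$, some $Q_j$ with $j\in I^*$ computes $\nu(g')$, and maximality of $i$ in $I^*$ then gives $\nu_i(g')=\nu(g')$, whence $\beta_i=\tilde\beta_i=\min\alpha$ and $\alpha\subseteq\beta$. In (ii)($\Rightarrow$), the choice of $I'$ and the verification of (b) cannot just be ``read off'': condition (a) quantifies over \emph{all} tails of $I'$, and the paper's $I'$ is either all of $I^*$ or the initial segment $I_{<i_0}$ below the first index where a tail of $\{\beta_j\}$ fails to generate $\alpha$ (not an arbitrary cofinal subset); and in the case where $\overline\alpha$ has no least class, proving that $\nu(g')-\nu_j(g')<\epsilon$ eventually for every $\epsilon>\Delta$ requires Lemma \ref{lamamilagroso}, which is exactly where the maximality of $\Delta$ is used. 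Your sketch omits both of these points.
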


For each $n\in \N$ we denote by $\textbf{Q}_n$ the set of elements of $\textbf{Q}$ of degree $n$. If $\textbf{Q}_n$ does not have a last element (with respect to $\nu$), then we say that $\textbf{Q}_n$ is a \textbf{plateau} of key polynomials. If $\textbf{Q}_n\neq \emptyset$, then we denote by $\alpha_n$ the segment $\alpha\left(\left\{\left.\nu\left(Q'\right)-\nu(Q)\ \right|\ Q\in\textbf{Q}_n\right\}\right)$ and by $I_n$ the fractional ideal $I_{\alpha_n}$. Clearly, $I_n\subseteq I_\alpha$ for every $n$ and, since there are only finitely many $n$'s for which $\textbf{Q}_n\neq \emptyset$, there exists $n\in \N$ such that
\begin{equation}
I_n=I_\alpha.\label{eq:In=Ialpha}
\end{equation}
Let $n_0$ be the smallest natural number for which this happens.
\begin{Obs} It follows from Proposition \ref{lowerbound} that in the situation of Theorem \ref{charatheorem} (ii) (a) we have 
\begin{equation}
\alpha=\alpha\left(\left\{\left.\min\limits_{\ell\in I_0(g,i)}\alpha_\ell\ \right|\ Q_i\in I_{n_0}\right\}\right).\label{eq:segmentsequal}
\end{equation}
\end{Obs}
\medskip
\begin{Def} Assume that the segment $\alpha$ does not contain a minimal element. We say that the set $\textbf{Q}_n$ is {\bf a minimizing plateau} if \eqref{eq:In=Ialpha} holds. The set $\textbf{Q}_{n_0}$ will be refered to as {\bf the first minimizing plateau}.
\end{Def}

\begin{Obs}\label{obsqgeuqjdaplor}\begin{enumerate}
\item[(1)] Assume that in (ii) of Theorem \ref{charatheorem} we have
\[
I'=\left\{i\in I^*\ \left|\ Q_i\in\textbf{Q}_{n_0}\right.\right\}.
\]

By Proposition \ref{Propprimeiroplau} below, for all sufficiently large elements $Q_i\in I_{n_0}$, we have
\[
\min\limits_{\ell\in I_0(g,i)}\alpha_\ell=\alpha_i,
\]
hence the equality \eqref{eq:segmentsequal} can be rewritten as
\begin{equation}
\alpha=\alpha\left(\left\{\left.\alpha_i\ \right|\ Q_i\in I_{n_0}\right\}\right).\label{eq:segmentsequal1}
\end{equation}
\item[(2)] Assume that $\textbf{Q}_{n_0}$ is the unique minimizing plateau. Then condition ii) of Theorem \ref{charatheorem} can be made more precise: ``there exists a subset $I'\subset I$ such that" can be replaced by ``letting $I'=\left\{i\in I^*\ \left|\ Q_i\in\textbf{Q}_{n_0}\right.\right\}$, we have".
\end{enumerate}
\end{Obs}
\medskip

One motivation for our study is to better understand the defect of an extension of valued fields. In \cite{Kuhl}, Kuhlmann introduces a classification of defect extensions of degree $p$ as dependent or independent. In \cite{Nov21} it is proved that this classification can be read from the sequence of key polynomials that define this extension. More recently, Cutkosky, Kuhlmann and Rzepka (see \cite{CKR}) proved that one can use the module of K\"ahler differentials to characterize when a defect extension of degree $p$ is dependent or independent. We relate the results obtained here with those in \cite{Nov21}. 

For a plateau $\textbf{Q}_n$ of key polynomials, we say that $f\in K[x]$ is \textbf{$\textbf{Q}_n$-stable} if there exists $Q\in \textbf{Q}_n$ such that
$\nu_Q(f)=\nu(f)$. A monic polynomial $F$ is called a \textbf{limit key polynomial for $\textbf{Q}_n$} if it is $\textbf{Q}_n$-unstable and has the smallest degree among $\textbf{Q}_n$-unstable polynomials. Denote by
\[
F=a_{Q0}+a_{Q1}Q+\ldots+a_{QD}Q^D
\]
the $Q$-expansion of $F$. Let $\delta=(\delta^L,\delta^R)$ be the cut on $\Gamma$ whose lower cut set $\delta^L$ is the smallest initial segment containing $\{\nu_Q(F)\mid Q\in \textbf{Q}_n\}$. Then we define
\[
B(F)=\{b\in\{1,\ldots, D-1\}\ |\ \nu(a_{Qb}Q^b)\in \delta^L\mbox{ for every }Q\mbox{ with large enough value}\}.
\]
One can show that this set does not depend on the choice of the limit key polynomial $F$, so we denote it by $B_n$.

We prove the following.
\begin{Prop}\label{Propbunitinhamans}
Assume that $(L/K,v)$ is an extension of valued fields with
\[
L=K(\eta)=K[x]/(g)
\]
and consider the valuation on $K[x]$ with support $(g)$ defined by $v$. Moreover, assume that $\{\nu(x-a)\mid a\in K\}$ does not have a maximum and that $\nu$ does not admit key polynomials of degree $n$ for every $n$, $1<n<\deg(g)$. Then
\[
\Omega_{\VR_L/\VR_K}=\{0\}\Llr 1\in B_1.
\] 
\end{Prop}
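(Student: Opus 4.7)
The plan is to specialize Theorem~\ref{charatheorem} to the simple structure of $\textbf{Q}$ dictated by the hypotheses and translate its conditions into the condition $1\in B_1$. Under the assumptions, the only key polynomials are those of degree one, forming a plateau $\textbf{Q}_1=\{x-a_i\}_{i\in I^*}$ (without a last element, since $\{\nu(x-a)\mid a\in K\}$ has no maximum), together with $g$ itself. Hence the values $\alpha_i=-v(\eta-a_i)$ admit no minimum, the final segment $\alpha$ has no minimal element, and Theorem~\ref{charatheorem}(i) cannot apply. As $\textbf{Q}_1$ is the unique plateau, it is the unique (and first) minimizing plateau, so Remark~\ref{obsqgeuqjdaplor}(2) lets me take $I'=I^*$. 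By Theorem~\ref{mainthm}, $\Omega_{\VR_L/\VR_K}=\{0\}$ is thus equivalent to the conjunction of (ii)(a) and (ii)(b) of Theorem~\ref{charatheorem} for $I'=I^*$.

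Next I identify the coefficient $a_{Q1}$ via Taylor's formula: for $Q=x-a$, $a_{Qj}=g^{(j)}(a)/j!$, so $a_{Q1}=g'(a)$ and $\nu(a_{Q1}Q)=v(g'(a))+v(\eta-a)$. The condition $1\in B_1$ therefore reads: for all $Q\in\textbf{Q}_1$ of sufficiently large value, $v(g'(a))+v(\eta-a)$ lies in $\delta^L$, i.e., is bounded above by $\nu_{Q'}(g)$ for some $Q'\in\textbf{Q}_1$. In our setting I intend to show this is equivalent to the cleaner statement that the minimum defining $\nu_Q(g)$ is attained at the term $j=1$ for all large $Q$, i.e., $\nu_Q(g)=v(g'(a))+v(\eta-a)$. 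The nontrivial direction uses the monotonicity $\nu_Q(g)\le\nu_{Q'}(g)$ for $\nu(Q)\le\nu(Q')$ (standard for truncations along a sequence of key polynomials) to reduce the general $Q'$ appearing in the definition of $\delta^L$ to $Q'=Q$ itself.

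Assuming $1\in B_1$, and hence $\nu_Q(g)=v(g'(a))+v(\eta-a)$ for all large $Q$, I differentiate the Taylor expansion, $g'=\sum_{k\ge 0}(k+1)a_{Q,k+1}Q^k$, and argue analogously that the minimum in $\nu_Q(g')$ is attained at $k=0$, yielding $\nu_Q(g')=v(g'(a))$. Subtracting gives $\tilde\beta_Q=\nu_Q(g')-\nu_Q(g)=-v(\eta-a)=\alpha_Q$ on a cofinal subset of $I^*$, which immediately yields condition (ii)(a). For (ii)(b), the equality $\nu_Q(g')=v(g'(a))$ combined with $v(g'(a))\to v(g'(\eta))=\nu(g')$ as $v(\eta-a)\to\infty$ gives the required weak-limit description of $\nu(g')$ modulo the isolated subgroup $\Delta$ attached to $\alpha$.

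For the converse, assume $\alpha=\beta$. Proposition~\ref{lowerbound} applied to $f=g$ gives $\tilde\beta_Q\ge\min_{\ell\in I_0(g,Q)}\alpha_\ell$; combined with condition (ii)(a), which forces cofinality of $\{\tilde\beta_Q\}_{Q\in I^*}$ with $\{\alpha_Q\}_{Q\in I^*}$ in the no-minimum segment $\alpha$, one concludes $\tilde\beta_Q=\alpha_Q$ on a cofinal subset of $I^*$. The main technical step, and the key obstacle, is then to deduce from the equality $\nu_Q(g')-\nu_Q(g)=-v(\eta-a)$ that the minimum in $\nu_Q(g)$ is realized at $j=1$: this requires a case analysis ruling out the possibility that the minimum in $\nu_Q(g)$ is achieved at $j=0$ or at some $j\ge 2$ (together with the corresponding analysis for $\nu_Q(g')$), using the asymptotic behaviour of the Taylor coefficients $v(g^{(j)}(a))$ as $a\to\eta$ in the valuation. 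Once the $j=1$ minimum is established for all large $Q$, we recover $\nu(a_{Q1}Q)=\nu_Q(g)\in\delta^L$, proving $1\in B_1$.
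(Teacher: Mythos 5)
Your overall frame (reduce to Theorem \ref{mainthm}/\ref{charatheorem} and translate into a statement about the coefficient $a_{Q1}=g'(a)$) points in the right direction, but the pivot of your argument is false. You claim that $1\in B_1$ is equivalent to ``the minimum defining $\nu_Q(g)$ is attained at the term $j=1$ for all large $Q$'', and you justify the nontrivial direction by ``reducing the witness $Q'$ in the definition of $\delta^L$ to $Q'=Q$'' via the monotonicity $\nu_Q(g)\le\nu_{Q'}(g)$ for $\nu(Q)\le\nu(Q')$. That reduction goes the wrong way: $\delta^L$ is the initial segment generated by \emph{all} the values $\nu_{Q'}(g)$, and the relevant witnesses $Q'$ have $\nu(Q')>\nu(Q)$, where the inequality $\nu_Q(g)\le\nu_{Q'}(g)$ gives you nothing. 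The equivalence itself fails: in the Artin--Schreier example of the paper, $g=x^p-x-a$ with $v(a)<0$ and $Q=x-a_n$, one has $a_{Q1}=-1$, $\nu(a_{Q1}Q)=\nu(x-a_n)=v(a)/p^n$, while $\nu_Q(g)=p\,\nu(x-a_n)=v(a)/p^{n-1}<\nu(a_{Q1}Q)$, so the minimum is attained at $j=p$, never at $j=1$; nevertheless $\nu(a_{Q1}Q)=\nu_{Q''}(g)$ for the next polynomial $Q''$ in the plateau, so $1\in B_1$ and indeed $\Omega_{\VR_L/\VR_K}=\{0\}$. The same example kills the pointwise identities you build on afterwards: there $\tilde\beta_n=-v(a)/p^{n-1}\ne\alpha_n=-v(a)/p^n$, so ``$\tilde\beta_Q=\alpha_Q$ on a cofinal subset'' does not follow from (ii)(a) (equality of the generated final segments never forces pointwise equality), and the ``main technical step'' you flag in the converse direction cannot be carried out because the statement it is meant to establish is false.

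The correct translation, and the one the paper uses, avoids any claim about where the minimum of $\nu_Q(g)$ is attained. By Theorem \ref{mainthm}, $\Omega_{\VR_L/\VR_K}=\{0\}$ iff the final segments $\alpha$ and $\beta$ coincide, i.e.\ iff for every $i$ there exists $j$ with $\beta_j<\alpha_i$. Using $\nu_j(g')=\nu(g')=\nu(g'(a_i))$ for large $i,j$ (valid because $\deg g'<\deg g$ and $g$ is the minimal unstable polynomial) and $\alpha_i=-\nu(x-a_i)$, this inequality rewrites as $\nu\bigl(g'(a_i)(x-a_i)\bigr)<\nu_j(g)$ for \emph{some} later index $j$, which is exactly the membership $\nu(a_{Q1}Q)\in\delta^L$, i.e.\ $1\in B_1$. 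So the equivalence is between $\alpha=\beta$ and a cut-comparison across different indices, not a statement internal to a single $Q$-expansion; your detour through Theorem \ref{charatheorem}(ii) and the weak-limit condition is also unnecessary once this direct translation is made.
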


\begin{Obs} In the situation of Proposition \ref{Propbunitinhamans}, if we take a set $\{x-a_\rho\}_{\rho<\lambda}\subseteq \Psi_1:=\{x-a\mid a\in K\}$ which is well-ordered and cofinal (with respect to $\nu$) in $\Psi_1$, then $\underline a=\{a_\rho\}_{\rho<\lambda}$ is a \textit{pseudo-Cauchy sequence} in $K$, $x$ is a limit of $\underline a$ and $g$ is a polynomial of smallest degree not fixed by $\underline{a}$ (in the language of \cite{Kapl}). Moreover, $v$ is the valuation on $L=K[x]/(g)$ obtained in \cite[Theorem 3]{Kapl}.
\end{Obs}

If the extension is of degree and defect $p$ and $\textbf{Q}$ is any sequence of key polynomials for $\nu$, then it follows from the defect formula \cite[Theorem 6.14]{Nov19} that $1$ and $p$ are the only natural numbers $n$ for which $\textbf{Q}_n\neq \emptyset$. In particular, $\textbf{Q}_1$ is the only (and hence, the minimizing) plateau (so we are in the case of Proposition \ref{Propbunitinhamans}). In particular, we are in case of Remark \ref{obsqgeuqjdaplor} (2) and our proof of Theorem \ref{charatheorem} specializes to the one in \cite{CKR}. In Section \ref{caseoplateu} we discuss this case in more details.

\section{The generation of an extension of valuation rings}
  
Let $(K,v)$ be a valued field and take a valuation $\mu$ on $K[x]$, extending $v$. For simplicity of notation, in this section we denote
\[
\Gamma_v=vK\mbox{ and }\Gamma_\mu= \mu(K[x]).
\]
Suppose that $e(\mu/v):=(\Gamma_\mu:\Gamma_v)=1$.

\begin{Obs} Valuations on $K[x]$ always admit complete sets: see for instance \cite[Theorem 1.1]{Novspivkeypol}. Another, recursive construction with explicit formulae is given in \cite{D} and \cite{SPatal}.
\end{Obs}
\begin{Lema}\label{lemaparasetpol}
If $\textbf{Q}\subseteq K[x]$ is a complete set for $\mu$, then any $K$-proportional set is also a complete set for $\mu$.
\end{Lema}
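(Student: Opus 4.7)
The plan is to reduce the lemma to the observation that the truncation $\mu_q$ depends only on $q$ up to multiplication by a nonzero constant. Concretely, I would first establish the following auxiliary fact: for any monic (or at least any nonconstant) polynomial $q\in K[x]$ with $\deg(q)\geq 1$ and any $a\in K^*$, the truncations coincide, i.e.\ $\mu_{aq}(f)=\mu_q(f)$ for every $f\in K[x]$. This is the only computation needed, and the lemma follows almost immediately.

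To verify the auxiliary fact, I would start from the $q$-expansion
\[
f=f_0+f_1q+\ldots+f_nq^n,\qquad \deg(f_i)<\deg(q),
\]
and rewrite it as
\[
f=(f_0)+(f_1a^{-1})(aq)+(f_2a^{-2})(aq)^2+\ldots+(f_na^{-n})(aq)^n.
\]
Since $\deg(f_ia^{-i})=\deg(f_i)<\deg(q)=\deg(aq)$, this is exactly the $aq$-expansion of $f$. Therefore
\[
\mu_{aq}(f)=\min_i\mu\bigl(f_ia^{-i}(aq)^i\bigr)=\min_i\mu(f_iq^i)=\mu_q(f),
\]
as desired. The uniqueness of the $q$-expansion (stated in the introduction) is what makes the rewriting legitimate.

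With this in hand, the proof of the lemma becomes a one-line verification. Let $\textbf{Q}'$ be $K$-proportional to $\textbf{Q}$ via a bijection $\phi:\textbf{Q}\to\textbf{Q}'$ with $\phi(q)=a_qq$, $a_q\in K^*$. Given $f\in K[x]$, completeness of $\textbf{Q}$ yields $q\in\textbf{Q}$ with $\deg(q)\le\deg(f)$ and $\mu(f)=\mu_q(f)$. Set $q'=\phi(q)\in\textbf{Q}'$; then $\deg(q')=\deg(q)\le\deg(f)$, and by the auxiliary fact $\mu_{q'}(f)=\mu_{a_qq}(f)=\mu_q(f)=\mu(f)$. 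Thus $q'$ witnesses completeness of $\textbf{Q}'$ for $f$.

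There is no real obstacle here; the only subtle point is to check that scaling by $a_q\in K^*$ preserves the shape of the expansion (degrees of coefficients stay strictly below $\deg(q)$), which is immediate because multiplication by a constant does not change degrees. One should also note implicitly that $a_q\ne0$, since $\textbf{Q}\subseteq K[x]\setminus K$ forces $\phi(q)\in K[x]\setminus K$, so the proportionality constant is automatically a unit.
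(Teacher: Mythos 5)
Your proof is correct and follows essentially the same route as the paper: rescale the coefficients of the $q$-expansion to obtain the $(aq)$-expansion, observe that the truncation values and degrees are unchanged, and transfer the completeness witness. The only difference is cosmetic — you state the invariance $\mu_{aq}(f)=\mu_q(f)$ as an explicit auxiliary fact and note $a_q\neq0$, both of which the paper leaves implicit.
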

\begin{proof}
For $f,q\in K[x]$ and $a\in K$, if
\[
f=f_0+f_1q+\ldots+f_nq^n
\]
is the $q$-expansion of $f$, then
\[
f=f_0+\frac{f_1}{a}\cdot(aq)+\ldots+\frac{f_n}{a^n}\cdot(aq)^n
\]
is the $aq$-expansion of $f$. If $\mu_q(f)=\mu(f)$, then $\mu_{aq}(f)=\mu(f)$ for every $a\in K$. Moreover, $\deg(q)=\deg(aq)$. Hence the result follows immediately.
\end{proof}

\begin{Cor}
If $e(\mu/v)=1$, then for every complete set $\textbf{Q}$ of $\mu$ there exists a complete set $\tilde{\textbf{Q}}$ for $\mu$,
$K$-proportional to $\textbf{Q}$, such that $\mu(q)=0$ for every $q\in\tilde{\textbf{Q}}$.
\end{Cor}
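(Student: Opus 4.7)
The plan is to produce the required complete set by multiplying each element of $\textbf{Q}$ by a suitable scalar in $K^\times$, with the scalar chosen precisely to kill the $\mu$-value. The key input is the ramification hypothesis $e(\mu/v)=1$, which tells us that $\Gamma_\mu=\Gamma_v$, so every value $\mu(q)$ already lies in $v(K^\times)$.

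Concretely, I would proceed as follows. First, for each $q\in\textbf{Q}$, use $e(\mu/v)=1$ to pick an element $c_q\in K^\times$ with $v(c_q)=\mu(q)$, and set $a_q:=c_q^{-1}\in K^\times$, so that
\[
\mu(a_q q)=v(a_q)+\mu(q)=-\mu(q)+\mu(q)=0.
\]
Define
\[
\tilde{\textbf{Q}}:=\{a_q q\mid q\in\textbf{Q}\},
\]
together with the map $\phi:\textbf{Q}\to\tilde{\textbf{Q}}$ given by $\phi(q)=a_q q$. Since each $a_q$ is a nonzero constant and each $q\in\textbf{Q}\subseteq K[x]\setminus K$ is nonconstant, the polynomial $a_q q$ is again nonconstant and has the same degree as $q$; in particular, different $q$'s yield different $a_q q$'s (indexing by $\textbf{Q}$ if one prefers), so $\phi$ is a bijection. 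By construction, $\tilde{\textbf{Q}}$ is $K$-proportional to $\textbf{Q}$ via $\phi$, and every element of $\tilde{\textbf{Q}}$ has $\mu$-value zero.

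Finally, apply Lemma \ref{lemaparasetpol} to conclude that $\tilde{\textbf{Q}}$, being $K$-proportional to the complete set $\textbf{Q}$, is itself a complete set for $\mu$. This yields the required $\tilde{\textbf{Q}}$.

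There is essentially no obstacle here: the only thing to verify is that the scalars exist, which is exactly the content of $e(\mu/v)=1$, and the completeness is transferred for free by the previous lemma. The one minor point worth stating explicitly in the write-up is that multiplying by a nonzero constant preserves nonconstancy (and degree), so $\tilde{\textbf{Q}}\subseteq K[x]\setminus K$, as required by the definition of a complete set.
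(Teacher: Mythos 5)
Your proof is correct and follows essentially the same route as the paper: use $e(\mu/v)=1$ to choose, for each $q\in\textbf{Q}$, a scalar $a_q\in K^\times$ with $v(a_q)=-\mu(q)$, so that $\mu(a_qq)=0$, and then invoke Lemma \ref{lemaparasetpol} to transfer completeness to the $K$-proportional set $\tilde{\textbf{Q}}=\{a_qq\mid q\in\textbf{Q}\}$. The extra remarks about degrees and nonconstancy are harmless but not needed beyond what the paper already records.
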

\begin{proof}
Since $e(\mu/v)=1$ for every $q\in K[x]$ there exists $a\in K$ such that 
\[
v(a^{-1})=\mu(q).
\]
Hence $\mu(aq)=0$ and we can apply the previous lemma. 
\end{proof}
\begin{Teo}\label{Theoremkeypol}
Take a complete set $\textbf{Q}\subseteq K[x]$ for $\mu$ such that $\mu(q)=0$ for every $q\in \textbf{Q}$. For $f\in K[x]$, if $\mu(f)\geq 0$ then there exist $\lambda_1,\ldots,\lambda_s\in\N^\textbf{Q}$ and $a_1,\ldots,a_s\in \VR_K$ such that
\[
f=\sum_{i=1}^s a_i\textbf{Q}^{\lambda_i}
\]
and
\begin{equation}
\mu(f)=\min\limits_{i\in\{1,\dots,s\}}v(a_i).\label{eq:minimalvalue}
\end{equation}
\end{Teo}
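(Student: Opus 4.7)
The plan is to proceed by strong induction on $\deg(f)$. The base case $\deg(f)=0$ is immediate: then $f\in K$ and $\mu(f)=v(f)\geq 0$, so $f\in\VR_K$, and the trivial representation $f=f\cdot\textbf{Q}^{\mathbf{0}}$ (with $\mathbf{0}\in\N^{\textbf{Q}}$ the zero map, so $\textbf{Q}^{\mathbf{0}}=1$) works with $s=1$ and $a_1=f$.

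For the inductive step, assume $\deg(f)\geq 1$ and that the result holds for every polynomial of strictly smaller degree with nonnegative $\mu$-value. Completeness of $\textbf{Q}$ supplies some $q\in\textbf{Q}$ with $\deg(q)\leq\deg(f)$ and $\mu(f)=\mu_q(f)$; take the $q$-expansion
\[
f = f_0 + f_1 q + \cdots + f_n q^n, \qquad \deg(f_i)<\deg(q)\leq\deg(f).
\]
Because $\mu(q)=0$, the definition of $\mu_q$ yields
\[
\mu(f) \;=\; \mu_q(f) \;=\; \min_{0\le i\le n}\mu(f_i q^i) \;=\; \min_{0\le i\le n}\mu(f_i),
\]
so each $f_i$ satisfies $\mu(f_i)\geq\mu(f)\geq 0$ and $\deg(f_i)<\deg(f)$. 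The induction hypothesis therefore applies to every $f_i$, producing
\[
f_i = \sum_{j} b_{i,j}\,\textbf{Q}^{\lambda_{i,j}},\qquad b_{i,j}\in\VR_K,\qquad \mu(f_i)=\min_{j} v(b_{i,j}).
\]

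Substituting, and using that $\textbf{Q}^{\lambda}\cdot q^i = \textbf{Q}^{\lambda+i\mathbf{1}_q}$ where $\mathbf{1}_q\in\N^{\textbf{Q}}$ is the indicator of $\{q\}$, I obtain
\[
f \;=\; \sum_{i,j} b_{i,j}\,\textbf{Q}^{\lambda_{i,j}+i\mathbf{1}_q},
\]
which is a finite sum of the required form with every coefficient in $\VR_K$. The valuation identity follows by chaining the two equalities above:
\[
\min_{i,j} v(b_{i,j}) \;=\; \min_{i}\min_{j} v(b_{i,j}) \;=\; \min_{i}\mu(f_i) \;=\; \mu(f).
\]

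I do not expect a genuine obstacle here: the whole argument is a formal induction that exploits the normalization $\mu(q)=0$, which is precisely what transfers the minimum from the polynomial level to the coefficient level in $K$. The one point worth checking carefully is the strict inequality $\deg(f_i)<\deg(f)$ in the edge case $\deg(q)=\deg(f)$; it is guaranteed by the requirement in the definition of the $q$-expansion that each $f_i$ have degree strictly less than $\deg(q)$, which keeps the induction going in all cases.
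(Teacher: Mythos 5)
Your proof is correct and follows essentially the same route as the paper's: induction on $\deg(f)$, using completeness to pick $q$ with $\mu(f)=\mu_q(f)$, the normalization $\mu(q)=0$ to transfer the minimum to the coefficients $f_i$ of the $q$-expansion, and the induction hypothesis applied to each $f_i$. The only (harmless) difference is that you start the induction at $\deg(f)=0$ rather than the paper's degree-one base case.
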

\begin{proof}
We proceed by induction on the degree of $f$. If $\deg(f)=1$, then by our assumption there exists $q\in\textbf{Q}$ of degree one (in this case $f=aq+b$ for $a,b\in K$), such that
\[
\mu(f)=\min\{\mu(aq),\mu(b)\}.
\]
Since $0\leq \min\{\mu(aq),\mu(b)\}$ and $\mu(q)=0$, we have $a,b\in\VR_K$ and we are done.

Now consider an integer $n>1$ and assume that for every $f\in K[x]$, if $\deg(f)<n$, then there exist $\lambda_1,\ldots \lambda_s\in\N^\textbf{Q}$ and $a_1,\ldots,a_s\in\VR_K$ such that
\[
f=\sum_{i=1}^s a_i\textbf{Q}^{\lambda_i}
\]
and (\ref{eq:minimalvalue}) holds.

Take $f\in K[x]$ with $\deg(f)=n$ and $\mu(f)\geq 0$. By our assumption on $\textbf{Q}$, there exists $q\in \textbf{Q}$ such that
$\deg(q)\leq\deg(f)$ and $\mu(f)=\mu_q(f)$. This means that
\[
f=f_0+f_1q+\ldots+f_rq^r\mbox{ with }\deg(f_i)<\deg(q)\mbox{ for every }i, 0\leq i\leq r,
\]
and
\[
0\leq \mu(f)=\min\left\{\mu\left(f_iq^i\right)\right\}\leq \mu(f_i),\mbox{ for every }i, 1\leq i\leq r,
\]
because $\mu(q)=0$. Since $\deg(f_i)<\deg(q)\leq\deg(f)=n$, there exist
\[
\lambda_{1,1},\ldots,\lambda_{1,s_1},\ldots,\lambda_{r,1},\ldots,\lambda_{r,s_r}\in\N^{\textbf{Q}}
\]
and $a_{1,1},\ldots, a_{r,s_r}\in \VR_K$ such that
\[
f_i=\sum_{j=1}^{s_i}a_{i,j}\textbf{Q}^{\lambda_{i,j}}
\]
and $\mu(f_i)=\min\limits_{j\in\{1,\dots,s_i\}}v(a_{i,j})$ for every $i$, $0\leq i\leq r$. This implies that
\[
f=\sum_{i=0}^r\left(\sum_{j=1}^{s_i}a_{i,j}\textbf{Q}^{\lambda_{i,j}}\right)q^i=
\sum_{i=0}^r\sum_{j=1}^{s_i}a_{i,j}\textbf{Q}^{\lambda'_{i,j}},
\]
where
\begin{displaymath}
\lambda'_{i,j}\left(q'\right)=\left\{
\begin{array}{ll}
i&\mbox{ if }q=q'\\
\lambda_{i,j}(q)&\mbox{ if }q\neq q'
\end{array}
\right.
\end{displaymath}
and $\mu(f)=\min\limits_{\substack{i\in\{1,\dots,r\}\\j\in\{1,\dots,s_i\}}}v(a_{i,j})$.
This concludes our proof.
\end{proof}

\subsection{A generalization}\label{epsilondelta}

Theorem \ref{Theoremkeypol} above can be generalized to a case when $e(\mu/v)$ is not necessarily $1$. We will need some auxiliary results.

\begin{Prop}\label{propogaranteepsilo}
Let $\Gamma$ be an ordered abelian group such that $\mu(K[x])\subseteq \Gamma$ and assume that there exist $\gamma_1,\ldots,\gamma_\epsilon\in \Gamma$ for which
\[
\Gamma=\bigcup_{i=1}^\epsilon\left(\gamma_i+v K\right).
\]
For any subset $\textbf{Q}$ of $K[x]$ there exists a $K$-proportional set $\tilde{\textbf{Q}}$, such that
\[
\mu\left(\tilde{\textbf{Q}}\right):=\left\{\mu(q)\ \left|\
q\in\tilde{\textbf{Q}}\right.\right\}\subseteq \{\gamma_1,\ldots,\gamma_\epsilon\}.
\]
In particular, if $\textbf{Q}$ is a complete set so is $\tilde{\textbf{Q}}$.
\end{Prop}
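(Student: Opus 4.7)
The proof is essentially a direct application of the hypothesis together with Lemma \ref{lemaparasetpol}. Here is the plan.

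For each $q\in\textbf{Q}$ (assumed nonzero, so that $\mu(q)\in\Gamma$), I would use the covering hypothesis to pick an index $i(q)\in\{1,\dots,\epsilon\}$ such that $\mu(q)\in\gamma_{i(q)}+vK$. This guarantees the existence of $a_q\in K^*$ with $v(a_q)=\mu(q)-\gamma_{i(q)}$. Setting $\tilde q:=a_q^{-1}q$, one immediately gets
\[
\mu(\tilde q)=\mu(q)-v(a_q)=\gamma_{i(q)}\in\{\gamma_1,\dots,\gamma_\epsilon\}.
\]

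I would then define $\tilde{\textbf{Q}}:=\{\tilde q\mid q\in\textbf{Q}\}$ together with the obvious bijection $\phi:\textbf{Q}\to\tilde{\textbf{Q}}$, $\phi(q)=a_q^{-1}q$. By construction $\phi(q)=a_q^{-1}q$ with $a_q^{-1}\in K$, so $\tilde{\textbf{Q}}$ is $K$-proportional to $\textbf{Q}$, and $\mu(\tilde{\textbf{Q}})\subseteq\{\gamma_1,\dots,\gamma_\epsilon\}$ by the previous display.

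For the ``in particular'' clause, if $\textbf{Q}$ happens to be a complete set for $\mu$, then since $\tilde{\textbf{Q}}$ is $K$-proportional to $\textbf{Q}$, Lemma \ref{lemaparasetpol} applies verbatim and yields that $\tilde{\textbf{Q}}$ is also a complete set for $\mu$.

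There is no real obstacle here: the content of the statement is just that the freedom of multiplying each $q\in\textbf{Q}$ by a scalar in $K^*$ shifts its value by an arbitrary element of $vK$, so finitely many coset representatives $\gamma_1,\dots,\gamma_\epsilon$ suffice to ``normalize'' the values of the elements of $\textbf{Q}$. The only minor subtlety is the implicit assumption that elements of $\textbf{Q}$ are nonzero so that $\mu(q)$ lies in $\Gamma$; zero elements, if present, can simply be left untouched (and contribute $+\infty$, which we exclude from $\mu(\tilde{\textbf{Q}})$ as is standard).
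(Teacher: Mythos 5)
Your proposal is correct and follows essentially the same route as the paper: pick, for each $q\in\textbf{Q}$, a scalar in $K$ shifting $\mu(q)$ into $\{\gamma_1,\dots,\gamma_\epsilon\}$ using the coset covering hypothesis, and then invoke Lemma \ref{lemaparasetpol} for the completeness claim. Your remark about excluding zero elements is a harmless extra precaution not needed in the paper's setting.
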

\begin{proof}
For every $f\in K[x]$, by our assumption on $\Gamma$, there exist $i$, $1\leq i\leq \epsilon$, and $\tilde{b}\in K$ such that
\[
\mu(f)=\gamma_i+v\left(\tilde{b}\right).
\] 
Hence, $\mu(bf)=\gamma_i$ for $b=\tilde{b}^{-1}$.

In particular, for $q\in \textbf{Q}$, there exists $c_q\in K$ such that $\mu(c_qq)=\gamma_i$ for some $i$, $1\leq i\leq \epsilon$. Set $\tilde{\textbf{Q}}:=\{c_qq\mid q\in \textbf{Q}\}$. By definition $\tilde{\textbf{Q}}$ is $K$-proportional to $\textbf{Q}$. By Lemma \ref{lemaparasetpol}, if $\textbf{Q}$ is a complete set for $\mu$, hence  $\tilde{\textbf{Q}}$ is also a complete set for $\mu$.
\end{proof}

\begin{Teo}\label{Theoremkeypol2}
Let $\Gamma$ be an ordered abelian group such that $\mu(K[x])\subseteq \Gamma$ and assume that there exist $\gamma_1,\ldots,\gamma_\epsilon\in \Gamma$, $\gamma_i<v(a)$ for every $a\in \MI_K$, for which
\[
\Gamma=\bigcup_{i=1}^\epsilon\left(\gamma_i+v K\right).
\]
Then there exists a complete set $\tilde{\textbf{Q}}$ for $\mu$ such that for every $f\in K[x]$, if $\mu(f)\geq 0$, then there exist
$\lambda_1,\ldots,\lambda_s\in\N^{\tilde{\textbf{Q}}}$ and $a_1,\ldots,a_s\in \VR_K$ such that
\[
f=\sum_{i=1}^s a_i{\tilde{\textbf{Q}}}^{\lambda_i}
\]
and $\mu(f)=\min\limits_{i\in\{1,\dots,s\}}\mu\left(a_i{\tilde{\textbf{Q}}}^{\lambda_i}\right)$.
\end{Teo}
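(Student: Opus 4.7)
The plan is to adapt the induction from the proof of Theorem \ref{Theoremkeypol}, with two modifications: we use Proposition \ref{propogaranteepsilo} to obtain a $K$-proportional complete set $\tilde{\textbf{Q}}$ whose elements take values in a specific finite set of coset representatives of $vK$ in $\Gamma$, and we exploit the hypothesis $\gamma_i<v(a)$ for every $a\in\MI_K$ to keep the coefficients in $\VR_K$ during the recursive decomposition.

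First, I would observe that we may assume without loss of generality that $\gamma_i\le 0$ for every $i$. Indeed, if some $\gamma_i>0$, pick any $c_i\in\MI_K$; by hypothesis $\gamma_i<v(c_i)$, so $\gamma_i':=\gamma_i-v(c_i)<0$, and $\gamma_i'$ lies in the same coset of $vK$ as $\gamma_i$. The inequality $\gamma_i'<v(a)$ for $a\in\MI_K$ is preserved, since $ac_i\in\MI_K$ and hence $\gamma_i<v(ac_i)=v(a)+v(c_i)$. So after replacing $\gamma_i$ by $\gamma_i'$ if needed, Proposition \ref{propogaranteepsilo} produces a complete set $\tilde{\textbf{Q}}$ with $\mu(q)\le 0$ for every $q\in\tilde{\textbf{Q}}$.

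Next, I would prove the existence of the desired decomposition by strong induction on $\deg(f)$. The case $\deg(f)=0$ is immediate: $f\in K$ with $\mu(f)\ge 0$ gives $f\in\VR_K$, so $f=f\cdot\tilde{\textbf{Q}}^{0}$. For $\deg(f)=n\ge 1$ with $\mu(f)\ge 0$, completeness yields $q\in\tilde{\textbf{Q}}$ with $\deg(q)\le n$ and $\mu(f)=\mu_q(f)$; consider the $q$-expansion $f=f_0+f_1 q+\ldots+f_r q^r$. Setting $\gamma_j=\mu(q)\le 0$, the inequality $\mu(f_i q^i)=\mu(f_i)+i\gamma_j\ge 0$ gives $\mu(f_i)\ge -i\gamma_j\ge 0$. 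Since $\deg(f_i)<\deg(q)\le n$, the inductive hypothesis produces $f_i=\sum_k a_{i,k}\tilde{\textbf{Q}}^{\lambda_{i,k}}$ with $a_{i,k}\in\VR_K$ and $\mu(f_i)=\min_k\mu(a_{i,k}\tilde{\textbf{Q}}^{\lambda_{i,k}})$. Multiplying by $q^i$ and reindexing as in the proof of Theorem \ref{Theoremkeypol} yields $f=\sum_{i,k}a_{i,k}\tilde{\textbf{Q}}^{\lambda'_{i,k}}$, where $\lambda'_{i,k}$ is obtained from $\lambda_{i,k}$ by increasing the value at $q$ by $i$. Each term satisfies
\[
\mu\bigl(a_{i,k}\tilde{\textbf{Q}}^{\lambda'_{i,k}}\bigr)=\mu\bigl(a_{i,k}\tilde{\textbf{Q}}^{\lambda_{i,k}}\bigr)+i\gamma_j=\mu\bigl(a_{i,k}\tilde{\textbf{Q}}^{\lambda_{i,k}}q^i\bigr),
\]
so taking the minimum over $k$ (for fixed $i$) gives $\mu(f_i q^i)$, and then over both indices gives $\mu_q(f)=\mu(f)$, as required.

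The main obstacle, relative to the proof of Theorem \ref{Theoremkeypol}, is ensuring $\mu(f_i)\ge 0$ so that the inductive hypothesis applies to $f_i$. This is precisely what the normalization $\gamma_i\le 0$ accomplishes, and it is enabled by the extra assumption $\gamma_i<v(a)$ for every $a\in\MI_K$; without this hypothesis the values $\mu(f_i)$ could drop below zero and the induction would not close.
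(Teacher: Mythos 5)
Your argument is correct, and the induction does close exactly as you describe once every representative satisfies $\gamma_i\le 0$; the reduction to that case via $\gamma_i':=\gamma_i-v(c_i)$ with $c_i\in\MI_K$ is legitimate, since the hypothesis $\gamma_i<v(c_i)$ is precisely what makes $\gamma_i'<0$ while staying in the same coset of $vK$ (and when $v$ is trivial on $K$ all $\gamma_i$ are already $0$, so there is nothing to normalize). The paper takes a different route at this point: it keeps the given representatives (in the intended application, supplied by Proposition \ref{Propcharepsioln} with $0=\gamma_1<\ldots<\gamma_\epsilon<(vK)_{>0}$), builds $\tilde{\textbf{Q}}$ from them via Proposition \ref{propogaranteepsilo}, and secures integrality of the coefficients through the observation that $\mu(aq)\ge 0$ with $q\in\tilde{\textbf{Q}}$ forces $a\in\VR_K$ (if $v(a)<0$, then $0<v(a^{-1})\le\mu(q)=\gamma_i$ contradicts the assumption on the $\gamma_i$), and then declares the rest to be the same induction as in Theorem \ref{Theoremkeypol}. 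Your normalization buys a cleaner argument: with $\gamma_j\le 0$ the values $\mu(f_i)$ never drop below $0$, so the inductive hypothesis always applies and no separate integrality observation is needed. What it gives up is the sign of $\mu(q)$: whenever some original $\gamma_i>0$, your complete set contains elements with $\mu(q)<0$, hence $q(\eta)\notin\VR_L$, so your $\tilde{\textbf{Q}}$ cannot be fed directly into the proof of Proposition \ref{Corfinitelgenalg}, where the inclusion $\VR_K\left[\tilde{\textbf{Q}}(\eta)\right]\subseteq\VR_L$ requires $\nu(q)\ge 0$ and the paper therefore insists on representatives in $[0,(vK)_{>0})$. In short, you have proved the theorem as literally stated (it only asserts the existence of some complete set with the stated property), by a genuinely different normalization than the paper's; but to obtain the version used later in the paper one must run the induction with the nonnegative representatives and rely on the paper's integrality observation rather than on your sign argument.
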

\begin{proof}
Take any complete set $\textbf{Q}$ for $\mu$ and construct the complete set $\tilde{\textbf{Q}}$ as in Proposition \ref{propogaranteepsilo}. Observe that if $\mu(aq)\geq 0$ for some $q\in\textbf{Q}$ and $a\in K$, then $a\in \VR_K$. Indeed, if $\mu(aq)\geq 0$, then $v\left(a^{-1}\right)\leq\mu(q)$. Hence, $v(a)<0$ would imply that
\[
0< v\left(a^{-1}\right)\leq \mu(q)=\gamma_i,\mbox{ for some }i, 1\leq i\leq \epsilon,
\]
and this would be a contradiction to our assumption on the $\gamma_i$'s.

The same reasoning as in the proof the Theorem \ref{Theoremkeypol} (replacing $\mu(q)=0$ by $\mu(q)=\gamma_i$) can be used to conclude the proof of Theorem \ref{Theoremkeypol2}.
\end{proof}

For an ordered abelian group $\Gamma$ and a subgroup $\Delta$ we define
\[
\epsilon(\Gamma\mid \Delta)=|\{\gamma\in\Gamma\mid 0\leq \gamma<\Delta_{>0}\}|.
\]
\begin{Prop}[Proposition 3.4 of \cite{Nov12}]\label{Propcharepsioln}
Let $\Gamma$ be an ordered abelian group and take $\Delta$ a subgroup of $\Gamma$ of finite index. If $\epsilon:=\epsilon(\Gamma\mid\Delta)=[\Gamma:\Delta]$, then there exist $\gamma_1,\ldots, \gamma_\epsilon\in \Gamma$ such that
\[
\Gamma=\bigcup_{i=1}^{\epsilon}\left(\gamma_i+\Delta\right)\mbox{ and }0=\gamma_1<\ldots<\gamma_\epsilon<\Delta_{>0}.
\] 
\end{Prop}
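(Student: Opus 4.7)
Proof plan. Let $S=\{\gamma\in\Gamma\sep 0\le\gamma<\Delta_{>0}\}$, so by hypothesis $|S|=\epsilon=[\Gamma:\Delta]$. The plan is to show that $S$ is a complete set of coset representatives for $\Delta$ in $\Gamma$ and then enumerate its elements in the required order. If $\Delta=\{0\}$ then any ordered abelian group $\Gamma$ of finite index $\epsilon$ over $\{0\}$ must itself be $\{0\}$ (ordered abelian groups are torsion-free), so $\epsilon=1$ and the statement is trivial. Henceforth I will assume $\Delta\ne\{0\}$, which guarantees $\Delta_{>0}\ne\emptyset$ and consequently $0\in S$.

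The key step is to verify that distinct elements of $S$ lie in distinct cosets of $\Delta$. Suppose $\gamma,\gamma'\in S$ with $\gamma<\gamma'$ and $\gamma'-\gamma\in\Delta$. Then $\gamma'-\gamma\in\Delta_{>0}$; but the definition of $S$ forces $\gamma'<\delta$ for every $\delta\in\Delta_{>0}$, and taking $\delta=\gamma'-\gamma$ yields $\gamma'<\gamma'-\gamma$, i.e., $\gamma<0$, contradicting $\gamma\ge 0$. Thus the map $S\to\Gamma/\Delta$ is injective, and since $|S|=[\Gamma:\Delta]$, it is bijective. In particular every coset of $\Delta$ in $\Gamma$ has exactly one representative in $S$, which gives the decomposition $\Gamma=\bigcup_{\gamma\in S}(\gamma+\Delta)$.

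To finish, order $S=\{\gamma_1,\ldots,\gamma_\epsilon\}$ so that $\gamma_1<\gamma_2<\ldots<\gamma_\epsilon$. Since $0\in S$ and $0$ is the smallest non-negative element of $\Gamma$, we have $\gamma_1=0$. The upper bound $\gamma_\epsilon<\Delta_{>0}$ holds by the very definition of $S$. Together with the coset decomposition obtained above, this yields precisely the conclusion of the proposition.

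The argument is essentially a counting/order argument and I do not anticipate a serious obstacle; the only subtlety is making sure the inequality $\gamma'<\Delta_{>0}$ is used in the strong (universally quantified) sense when arguing that two elements of $S$ cannot differ by an element of $\Delta_{>0}$. The edge case $\Delta=\{0\}$ should be dispatched separately at the start, as above, since it is the only situation in which $\Delta_{>0}=\emptyset$ and the interval description of $S$ would otherwise have to be re-interpreted.
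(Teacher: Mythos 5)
Your argument is correct. Note that the paper itself gives no proof of this statement: it is quoted verbatim from the reference \cite{Nov12} (Proposition 3.4 there), so there is no in-paper argument to compare against. Your self-contained proof is the natural one and it works: the crux is exactly the injectivity of the map $S\to\Gamma/\Delta$ on the set $S=\{\gamma\in\Gamma\mid 0\le\gamma<\Delta_{>0}\}$, and your verification (if $\gamma<\gamma'$ in $S$ with $\gamma'-\gamma\in\Delta_{>0}$, then $\gamma'<\gamma'-\gamma$ forces $\gamma<0$) uses the universally quantified meaning of $\gamma'<\Delta_{>0}$ correctly; combined with $|S|=\epsilon=[\Gamma:\Delta]<\infty$, the counting step gives bijectivity, and ordering $S$ with $\min S=0$ yields the displayed decomposition. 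The separate treatment of $\Delta=\{0\}$ is harmless but not actually needed: the condition $\gamma<\Delta_{>0}$ is vacuous in that case, $0\in S$ still holds, and the injectivity argument degenerates trivially, so the main argument covers it as written.
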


\subsection{The algebraic case}
Let $(L/K,v)$ be a simple algebraic extensions of valued fields. Write $L=K(\eta)$ and let $g$ be the minimal polynomial of $\eta$ over $K$. Let $\nu$ be the valuation on $K[x]$ with support $(g)$ defined by $v$.

For $\textbf{Q}\subseteq K[x]$ we denote by $\textbf{Q}(\eta):=\{q(\eta)\mid q\in \textbf{Q}\}$.
\begin{Prop}\label{Corfinitelgenalg}
Assume that $e=\epsilon(vL\mid vK)$. For any complete set of polynomials $\textbf{Q}$ for $\nu$ there exists a $K$-proportional complete set $\tilde{\textbf{Q}}$ such that
\[
\VR_L=\VR_K\left[\tilde{\textbf{Q}}(\eta)\right].
\]
\end{Prop}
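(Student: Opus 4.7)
The plan is to deduce Proposition \ref{Corfinitelgenalg} as a direct consequence of Theorem \ref{Theoremkeypol2} combined with Proposition \ref{Propcharepsioln}, applied to $\mu=\nu$ with the groups $\Gamma=vL$ and $\Delta=vK$. By hypothesis, $[vL:vK]=e=\epsilon(vL\mid vK)$, so Proposition \ref{Propcharepsioln} produces coset representatives $\gamma_1,\ldots,\gamma_e\in vL$ with
\[
vL=\bigcup_{i=1}^e(\gamma_i+vK),\qquad 0=\gamma_1<\gamma_2<\cdots<\gamma_e<(vK)_{>0}.
\]
In particular every $\gamma_i$ is non-negative and satisfies $\gamma_i<v(a)$ for every $a\in\MI_K$, which is precisely the standing hypothesis required by Theorem \ref{Theoremkeypol2}.

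Next, applying Theorem \ref{Theoremkeypol2} to the given complete set $\textbf{Q}$ (with the same $\gamma_i$'s), I obtain a $K$-proportional complete set $\tilde{\textbf{Q}}$ for $\nu$ such that $\nu(q)\in\{\gamma_1,\ldots,\gamma_e\}$ for every $q\in\tilde{\textbf{Q}}$, and such that every $f\in K[x]$ with $\nu(f)\geq 0$ admits a representation
\[
f=\sum_{i=1}^s a_i\,\tilde{\textbf{Q}}^{\lambda_i},\qquad a_i\in\VR_K,\ \lambda_i\in\N^{\tilde{\textbf{Q}}}.
\]

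With this $\tilde{\textbf{Q}}$ I verify the two inclusions of $\VR_L=\VR_K[\tilde{\textbf{Q}}(\eta)]$. The inclusion $\VR_K[\tilde{\textbf{Q}}(\eta)]\subseteq\VR_L$ is immediate: every generator $q(\eta)$ has $v(q(\eta))=\nu(q)=\gamma_i\geq 0$, hence lies in $\VR_L$, and $\VR_L$ is a ring containing $\VR_K$. For the reverse inclusion, given $y\in\VR_L$, I write $y=f(\eta)$ for the unique $f\in K[x]$ with $\deg f<\deg g$; then $\nu(f)=v(f(\eta))=v(y)\geq 0$, so Theorem \ref{Theoremkeypol2} applied to $f$ yields
\[
y=f(\eta)=\sum_{i=1}^s a_i\,\tilde{\textbf{Q}}(\eta)^{\lambda_i}\in\VR_K\bigl[\tilde{\textbf{Q}}(\eta)\bigr].
\]

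The substantive work is carried out by the earlier results, and no new obstacle arises. The only point requiring a brief comment is that $\nu$ is a valuation \emph{with support} $(g)$ rather than a proper valuation on $K[x]$; but on $K[x]\setminus(g)$ it is an ordinary valuation with values in $vL$, and the proofs of Proposition \ref{propogaranteepsilo} and Theorem \ref{Theoremkeypol2} only use such $f$ (since the hypothesis $\nu(f)\geq 0$ implicitly excludes $f\in(g)$), so they apply without modification.
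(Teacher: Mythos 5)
Your proposal is correct and follows essentially the same route as the paper: Proposition \ref{Propcharepsioln} supplies the coset representatives $0=\gamma_1<\cdots<\gamma_e<(vK)_{>0}$, the $K$-proportional complete set with values among the $\gamma_i$ comes from Proposition \ref{propogaranteepsilo} (which is exactly how the proof of Theorem \ref{Theoremkeypol2} produces $\tilde{\textbf{Q}}$ from the given $\textbf{Q}$, so your direct invocation of that theorem is legitimate), and the inclusion $\VR_L\subseteq\VR_K[\tilde{\textbf{Q}}(\eta)]$ is obtained, as in the paper, by writing $b=f(\eta)$ with $\deg f<\deg g$ and applying the representation of polynomials of non-negative value. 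Your extra remarks (the trivial reverse inclusion and the comment on the support $(g)$) are points the paper leaves implicit and do not change the argument.
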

\begin{proof}
Since $e=\epsilon(vL:vK)$, by Proposition \ref{Propcharepsioln} there exist $\gamma_1,\ldots, \gamma_\epsilon\in vL$ such that
\[
vL=\bigcup_{i=1}^{\epsilon}(\gamma_i+vK)\mbox{ and }0=\gamma_1<\gamma_1<\ldots<\gamma_\epsilon<v(\MI_K).
\]
By Proposition \ref{propogaranteepsilo} there exists a complete set $\tilde{\textbf{Q}}$, $K$-proportional to $\textbf{Q}$, such that
\[
\nu\left(\tilde{\textbf{Q}}\right)\subseteq \{\gamma_1,\ldots,\gamma_\epsilon\}.
\]
By Theorem \ref{Theoremkeypol2} we deduce that for every $f\in K[x]$ with $\nu(f)\geq 0$ there exist $a_1,\ldots,a_r\in \VR_K$ and $\lambda_1,\ldots,\lambda_r\in\N^{\tilde{\textbf{Q}}}$ such that
\[
f=\sum_{i=1}^r a_i\tilde{\textbf{Q}}^{\lambda_i}.
\]

For every $b\in L$ there exists a polynomial $f(x)\in K[x]$ (with $\deg(f)<\deg(g)$) such that $b=f(\eta)$. If $b\in \VR_L$, then $0\leq\nu(f(x))$. By the previous paragraph, there exist  $a_1,\ldots,a_r\in \VR_K$ and $\lambda_1,\ldots,\lambda_r\in\N^{\tilde{\textbf{Q}}}$ such that
\[
b=f(\eta)=\sum_{i=1}^ra_i\tilde{\textbf{Q}}(\eta)^{\lambda_i}\in \VR_K\left[\tilde{\textbf{Q}}(\eta)\right].
\]
\end{proof}

\section{The characterization theorem}\label{Characteri}
In what follows, ``sequence" and ``well ordered set" will be used interchangeably. Let $\textbf{Q}=\{Q_i\}_{i\in I}$ be a sequence of key polynomials for $\nu$. From now till the end of the paper we shall assume that $e(L/K,v)=1$.

For each $f\in K[x]$ and $i\in I$ we will denote $\nu_i(f)=\nu_{Q_i}(f)$ (for simplicity of notation). We define the \textbf{full $i$-th expansion of $f$} as the expression
\begin{equation}\label{expiadicf}
f=\sum_{j=1}^rb_j \textbf{Q}^{\lambda_j}\mbox{ with $b_j\in K$ and }\lambda_j(Q_k)=0\mbox{ if }k\geq i
\end{equation}
constructed as follows. Let $f=f_{i0}+f_{i1}Q_i+\ldots+f_{ir}Q_i^r$ be the $Q_i$-expansion of $f$. For each $f_{ij}\neq 0$ there exists $k<i$ such that $\nu_k(f_{ij})=\nu(f_{ij})$. Since $I$ is well-ordered we can choose $k$ to be the smallest element of $I$ with this property. Let
\[
f_{ij}=f_{jk0}+f_{jk1}Q_k+\ldots+f_{jks}Q_k^s
\]
be the $Q_k$-expansion of $f_{ij}$. We can proceed taking expansions of the ``coefficients". Since in each step the degree of the coefficients of the expansions decreases, we will reach the case where these coefficients belong to $K$. Hence we obtain an expression of the form \eqref{expiadicf}. Moreover we have
\[
\nu_i(f)=\min_{1\leq j\leq r}\left\{v(b_j)+\sum_{k\in I}\lambda_j(Q_k)\nu(Q_k)\right\}.
\]

\begin{Obs}
The full $i$-th expansion, as constructed above, has the following uniqueness property.
\begin{enumerate}
\item[(a)] Equality $\nu_k(f_{ij})=\nu(f_{ij})$ and its analogues hold at each step of our recursive process.
\item[(b)] The tuple consisting of all the indices $k$ appearing in (a), written in the decreasing order, is lexicographically the smallest possible subject to condition (a).
\end{enumerate}
This uniqueness property is not used in the sequel.
\end{Obs}

For each $i\in I$, since $e(L/K,v)=1$, there exists $a_i\in K$ such that $\nu(Q_i)=v(a_i)$. We set $\tilde Q_i:=\frac{Q_i}{a_i}$ and
$\underline{a}:=\{a_i\}_{i\in I}$. By Proposition \ref{Corfinitelgenalg} we have
\[
\VR_L=\VR_K\left[\left.\tilde Q_i(\eta)\ \right|\  i\in I\right].
\]
For each $f\in K[x]$, from \eqref{expiadicf} we obtain
\begin{equation}\label{superimportant}
f=\sum_{j=1}^rb_j \underline{a}^{\lambda_j}\tilde{\textbf{Q}}^{\lambda_j}=\sum_{j=1}^r\tilde b_j\tilde{\textbf{Q}}^{\lambda_j}
\end{equation}
for $\tilde b_j:=b_j\underline{a}^{\lambda_j}$. Moreover,
\[
\nu_i(f)=\min_{1\leq j\leq r}\left\{v\left(\tilde b_j\right)\right\}.
\]
\begin{Obs}
By the construction of the full $i$-th expansion we have
\[
\deg\left(\frac{\textbf{Q}^{\lambda_j}}{Q_i^{\lambda_j(Q_i)}}\right)=\deg\left(\frac{\tilde{\textbf{Q}}^{\lambda_j}}{\tilde Q_i^{\lambda_j(Q_i)}}\right)<\deg(Q_i)\mbox{ for every }j, 1\leq j\leq r.
\]
\end{Obs}

\subsection{The effect of derivation on polynomials}
For a polynomial $f\in K[x]$ we will denote by $f'\in K[x]$ (or by $\frac{d}{dx}(f)$) its formal derivative with respect to $x$, that is, if
\[
f=a_0+a_1x+\ldots+a_rx^r,
\]
then
\[
f'=a_1+2a_2x+\ldots+ra_rx^{r-1}.
\]
Let
\[
f=\sum_{j=1}^r\tilde b_j\tilde{\textbf{Q}}^{\lambda_j}
\]
be the $i$-th $\tilde{\textbf{Q}}$ full expansion of $f$ as in \eqref{superimportant}. For each $j$, write 
\[
\textbf{Q}^{\lambda_j}=Q_{i_1}^{\lambda_j(Q_{i_1})}\cdots Q_{i_r}^{\lambda_j(Q_{i_r})}.
\]
Then
\begin{displaymath}
\begin{array}{rcl}
\displaystyle\frac{d}{dx}\left(\textbf{Q}^{\lambda_j}\right)&=&\displaystyle\sum_{k=1}^r
\lambda_j(Q_{i_k})\left(Q_{i_1}^{\lambda_j(Q_{i_1})}\cdots Q_{i_k}^{\lambda_j(Q_{i_k})-1}\cdots Q_{i_r}^{\lambda_j(Q_{i_r})}\right)Q_{i_k}'\\[8pt]
&=&\displaystyle\sum_{k=1}^r\lambda_j(Q_{i_k})\left(Q_{i_1}^{\lambda_j(Q_{i_1})}\cdots Q_{i_r}^{\lambda_j(Q_{i_r})}\right)\frac{Q_{i_k}'}{Q_{i_k}}\\[8pt]
&=&\displaystyle\sum_{l\leq i}\lambda_j(Q_{l})\textbf{Q}^{\lambda_j}\frac{Q'_{l}}{Q_{l}}.
\end{array}
\end{displaymath}
Hence,
\begin{equation}\label{formuladevidada}
f'=\frac{d}{dx}\left(\sum_{j=1}^r\tilde b_j\tilde{\textbf{Q}}^{\lambda_j}\right)=\sum_{j=1}^r\sum_{l\leq i}\tilde b_j\lambda_j(Q_{l})\tilde{\textbf{Q}}^{\lambda_j}\frac{Q'_{l}}{Q_{l}}.
\end{equation}

\noindent{\bf Notation.} For an element $i\in I$ and $f\in K[x]$ as above, let 
\begin{equation}
I_0(f,i)=\{\ell\in I\ |\ \lambda_j(Q_\ell)\neq 0\mbox{ for some } j,\ 0\leq j\leq r\}.\label{eq:I0fi}
\end{equation}
\begin{Prop}\label{lowerbound} Consider elements $i\in I$, $f\in K[x]$ and $\gamma\in vL$ such that $\alpha_\ell\ge\gamma$ for all
$\ell\in I_0(f,i)$. Then $\nu_i\left(f'\right)-\nu_i(f)\geq\gamma$.
\end{Prop}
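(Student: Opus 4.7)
My plan is to exploit the explicit formula \eqref{formuladevidada} together with the fact that truncations at key polynomials are themselves valuations. Writing out $f$ in its full $i$-th expansion $f=\sum_{j=1}^r\tilde b_j\tilde{\textbf{Q}}^{\lambda_j}$, formula \eqref{formuladevidada} gives
\[
f'=\sum_{j=1}^{r}\sum_{\ell\in I_0(f,i)}T_{j,\ell},\qquad T_{j,\ell}:=\tilde b_j\,\lambda_j(Q_\ell)\,\tilde{\textbf{Q}}^{\lambda_j}\,\frac{Q'_\ell}{Q_\ell}.
\]
Since each $T_{j,\ell}$ lies in $K[x]$ (the factor $Q_\ell$ divides $\tilde{\textbf{Q}}^{\lambda_j}$ whenever $\lambda_j(Q_\ell)\neq 0$) and $\nu_i$ is a valuation on $K[x]$, the strategy is to show
\[
\nu_i(T_{j,\ell})\ \ge\ \nu_i(f)+\gamma
\]
for every $j$ and every $\ell\in I_0(f,i)$, from which the triangle inequality yields the proposition.

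The core computation factors $\nu_i(T_{j,\ell})$ into five contributions using multiplicativity:
\[
\nu_i(T_{j,\ell})=v(\tilde b_j)+\nu_i(\lambda_j(Q_\ell))+\nu_i\!\left(\tilde{\textbf{Q}}^{\lambda_j}\right)+\nu_i(Q'_\ell)-\nu_i(Q_\ell).
\]
I would next observe that all key polynomials $Q_m$ with $m\in I_0(f,i)$ satisfy $\deg(Q_m)\le\deg(Q_i)$, because the recursive construction of the full $i$-th expansion only uses $Q_i$ itself and strictly smaller key polynomials. Consequently $\deg(Q'_\ell)<\deg(Q_i)$ and the $Q_i$-expansion of $Q'_\ell$ is itself, giving $\nu_i(Q'_\ell)=\nu(Q'_\ell)$; for $\ell<i$ the same argument yields $\nu_i(Q_\ell)=\nu(Q_\ell)$; and for $\ell=i$ we have $\nu_i(Q_i)=\nu(Q_i)$ since $Q_i$ is a key polynomial. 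Likewise $\nu_i(\tilde Q_m)=\nu(\tilde Q_m)=0$ for every $m$, hence $\nu_i(\tilde{\textbf{Q}}^{\lambda_j})=0$. Finally $\nu_i(\lambda_j(Q_\ell))=v(\lambda_j(Q_\ell))\ge0$ since $\lambda_j(Q_\ell)\in\N$ and $\nu_i$ extends $v$ on $K$.

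Plugging these identities back in gives
\[
\nu_i(T_{j,\ell})\ \ge\ v(\tilde b_j)+\bigl(\nu(Q'_\ell)-\nu(Q_\ell)\bigr)=v(\tilde b_j)+\alpha_\ell\ \ge\ v(\tilde b_j)+\gamma,
\]
where the last inequality is the hypothesis on $I_0(f,i)$. Since $\nu_i(f)=\min_jv(\tilde b_j)$, we get the bound claimed above, and the proposition follows. The only real subtlety — and the step I expect to be the main obstacle to state cleanly rather than to verify — is establishing that $\nu_i(Q_\ell)=\nu(Q_\ell)$ and $\nu_i(Q'_\ell)=\nu(Q'_\ell)$ for every $\ell\in I_0(f,i)$, which rests on the degree control coming from the recursive definition of the full $i$-th expansion and on the fact that $Q_i$ is itself a key polynomial (so its truncation does not decrease its value).
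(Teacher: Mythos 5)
Your proof is correct and follows essentially the same route as the paper: apply formula \eqref{formuladevidada}, bound $\nu_i$ of each term below by $v(\tilde b_j)+\alpha_\ell\ge v(\tilde b_j)+\gamma$, and compare with $\nu_i(f)=\min_j v(\tilde b_j)$. You are merely more explicit than the paper about the justification (that $\nu_i$ is a valuation and that the degree control in the full $i$-th expansion gives $\nu_i=\nu$ on the relevant factors), which the paper leaves implicit; note only that for $\ell\in I_0(f,i)$ with $\ell\ne i$ one in fact has $\deg(Q_\ell)<\deg(Q_i)$, which is the cleaner form of your degree claim.
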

\begin{proof}
Since $v(\lambda_j(Q_{l}))\geq 0$ and $\nu\left(\tilde{\textbf{Q}}^{\lambda_j}\right)=0$, by \eqref{formuladevidada}, we have
\[
\nu_i\left(f'\right)\geq \min\left\{\left.v\left(\tilde b_j\right)+\alpha_\ell\ \right|\ 1\leq j\leq r\mbox{ and } \ell\in I_0(f,i)\right\}.
\]
Since
\[
\nu_i(f)=\min_{1\leq j\leq r}\left\{v\left(\tilde{b}_j\right)\right\}\mbox{ and }\alpha_\ell\geq \gamma\mbox{ for every }\ell\in I_0(f,i),
\]
the result follows.
\end{proof}

\begin{Cor}\label{eseeajudalembenp}
For every $f\in K[x]$ and $i\in I$, there exists $i'\in I$, $i'\leq i$, such that
\begin{equation}\label{eqnquealphavbeta}
\nu_i(f')-\nu_i(f)\geq\alpha_{i'}.
\end{equation}
\end{Cor}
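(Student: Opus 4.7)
The plan is to recognize this statement as an immediate consequence of Proposition \ref{lowerbound}, together with two structural facts about the set $I_0(f,i)$: it is finite, and it is contained in $\{\ell\in I\mid \ell\le i\}$.

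First I would note that if $f\in K$ (so that the full $i$-th expansion of $f$ is just $f=b_1\tilde{\textbf{Q}}^{\lambda_1}$ with $\lambda_1$ identically zero), then $I_0(f,i)=\emptyset$ and $f'=0$, so $\nu_i(f')=\infty$ and the inequality holds vacuously for any choice of $i'\in I$ (the set $I$ being nonempty since $g\in\textbf{Q}$). So the interesting case is when $f\notin K$, in which case $I_0(f,i)$ is nonempty.

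Next I would observe that $I_0(f,i)$ is finite: the full $i$-th expansion \eqref{expiadicf} is a finite sum indexed by $j=1,\ldots,r$, and each $\lambda_j\in\N^{\textbf{Q}}$ has finite support by definition. I would also note that by the recursive construction of the full $i$-th expansion (one starts with the $Q_i$-expansion, then expands each nonzero coefficient $f_{ij}$ using some $Q_k$ with $k<i$, and so on), every $Q_\ell$ that appears with nonzero exponent in any $\textbf{Q}^{\lambda_j}$ satisfies $\ell\le i$. Hence $I_0(f,i)\subseteq\{\ell\in I\mid\ell\le i\}$.

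Finally, since $I_0(f,i)$ is finite and nonempty, I would choose $i'\in I_0(f,i)$ that minimizes $\alpha_\ell$, i.e.\ such that $\alpha_{i'}=\min\{\alpha_\ell\mid\ell\in I_0(f,i)\}$. Then $i'\le i$ and $\alpha_\ell\ge\alpha_{i'}$ for every $\ell\in I_0(f,i)$, so Proposition \ref{lowerbound} applied with $\gamma=\alpha_{i'}$ yields
\[
\nu_i(f')-\nu_i(f)\ge\alpha_{i'},
\]
as required. The only mildly delicate point — hardly an obstacle — is verifying that the minimum defining $i'$ is attained, which follows from the finiteness of $I_0(f,i)$; no estimate or computation beyond invoking Proposition \ref{lowerbound} is needed.
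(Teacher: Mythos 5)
Your proof is correct and is essentially the paper's own argument: choose $i'\in I_0(f,i)$ with $\alpha_{i'}=\min_{\ell\in I_0(f,i)}\alpha_\ell$ and apply Proposition \ref{lowerbound} with $\gamma=\alpha_{i'}$. The extra remarks on finiteness of $I_0(f,i)$, the inclusion $I_0(f,i)\subseteq\{\ell\le i\}$, and the degenerate case $f\in K$ are sound but only make explicit what the paper leaves implicit.
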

\begin{proof}
Take $i'\leq i$ such that
\[
\alpha_{i'}=\min_{\ell\in I_0(f,i)}\alpha_\ell.
\]
By the previous proposition (for $\gamma=\alpha_{i'}$) we have the result.
\end{proof}
\begin{Cor}
If we set
\[
\alpha=\alpha(\alpha_i\mid i\in I\})\mbox{ and }\beta=\alpha(\{\beta_i\mid i\in I\})
\]
and consider the fractional ideals $I_\alpha$ and $I_\beta$, then $I_\alpha\subseteq I_\beta$.
\end{Cor}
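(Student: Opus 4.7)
To establish $I_\alpha \subseteq I_\beta$, the plan is to work at the level of final segments of $vL$ and exploit the monotonicity $\gamma_1\subseteq\gamma_2\Longleftrightarrow I_{\gamma_1}\subseteq I_{\gamma_2}$. Since $\alpha$ is by definition the smallest final segment containing the set $\{\alpha_i\mid i\in I\}$, the required inclusion $\alpha\subseteq\beta$ is equivalent to the concrete statement that for every $i\in I$ there exists $j\in I$ with $\beta_j\le\alpha_i$.

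To produce such a $j$ for a given $i$, I would take $j=i_{\max}$. Since $Q_{i_{\max}}=g$, the $Q_{i_{\max}}$-expansion of $g$ is the single-term expansion $g=1\cdot Q_{i_{\max}}$, so $\nu_{i_{\max}}(g)=\nu(g)=+\infty$ (recall that $\nu$ has support $(g)$). Consequently $\beta_{i_{\max}}=\nu(g')-\nu_{i_{\max}}(g)$ is to be read as the symbol $-\infty$; equivalently, the condition $v(y)\ge\beta_{i_{\max}}$ is vacuous, so $I_{\{\beta_{i_{\max}}\}}=L$. Either way, $\beta_{i_{\max}}\le\alpha_i$ for every $i\in I$, whence $\{\alpha_i\}\subseteq\beta$, $\alpha\subseteq\beta$, and finally $I_\alpha\subseteq I_\beta$.

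The main conceptual obstacle is that the inclusion as written is trivially verified by this degenerate observation about $i_{\max}$ and does not reflect the real content of the preceding Corollary \ref{eseeajudalembenp}. That Corollary, applied to $f=g$, together with the standard truncation inequality $\nu_i(g')\le\nu(g')$, yields $\beta_i\ge\alpha_{i'}$ for some $i'\le i$, and hence gives the \emph{reverse} containment $I_\beta\subseteq I_\alpha$. This reverse inclusion is what the introduction announces and is precisely what is needed for the quotient $I_\alpha/I_\beta$ in Theorem \ref{mainthm} to make sense, so the natural reading is that the statement should be corrected to $I_\beta\subseteq I_\alpha$; its proof is the chain $\beta_i=\nu(g')-\nu_i(g)\ge\nu_i(g')-\nu_i(g)\ge\alpha_{i'}$ applied uniformly in $i\in I^*$, after which the final-segment/fractional-ideal correspondence closes the argument.
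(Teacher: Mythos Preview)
Your diagnosis is correct: the Corollary is misstated. The paper's own proof derives, for each $i\in I^*$, the chain
\[
\beta_i=\nu(g')-\nu_i(g)\ge\nu_i(g')-\nu_i(g)\ge\alpha_{i'}
\]
for some $i'\le i$ (via Corollary~\ref{eseeajudalembenp} applied to $f=g$), which places every $\beta_i$ in the final segment $\alpha$ and hence yields $\beta\subseteq\alpha$ and $I_\beta\subseteq I_\alpha$. The printed conclusion ``$I_\alpha\subseteq I_\beta$'' is a typo, contradicted both by the introduction (which announces $I_\beta\subseteq I_\alpha$) and by Theorem~\ref{mainthm}, where the quotient $I_\alpha/I_\beta$ requires $I_\beta\subseteq I_\alpha$. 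Your second paragraph reproduces exactly the paper's argument for the corrected statement.

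One small point about your first paragraph: the attempt to salvage the literal inclusion via $j=i_{\max}$ does not quite work as written, because the same degeneracy afflicts $\alpha_{i_{\max}}=\nu(g')-\nu(g)$, which is equally ``$-\infty$'' and not an element of $vL$. So if one reads the index set as all of $I$, both $\alpha$ and $\beta$ become ill-defined (or both equal to all of $vL$), and the inclusion is vacuous in both directions. The intended index set is $I^*$, and with that reading only the inclusion $I_\beta\subseteq I_\alpha$ holds in general --- exactly as you argue.
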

\begin{proof}
For each $i\in I$, by Corollary \ref{eseeajudalembenp} there exist $i'\in I$ ($i'\leq i$) such that
\[
\beta_i\geq \nu_i(g')-\nu_i(g)\geq \alpha_{i'}.
\]
Hence, $I_\alpha\subseteq I_\beta$.
\end{proof}
\begin{Prop}\label{lemamvery}
Take $i\in I$ such that for every $i'<i$ we have
\begin{equation}\label{equationimport}
\alpha_{i'}>\alpha_i.
\end{equation}
For every $f\in K[x]$ we have $p\nmid S_i(f)$ if and only if
\[
\nu_i(f')-\nu_i(f)=\alpha_{i}.
\]
Moreover, if this is satisfied, then
\begin{equation}\label{equacaop1psenaow}
S_i(f')=\{l-1\mid l\in S_i(f)\setminus\{0\}\mbox{ and }p\nmid l\}.
\end{equation}
\end{Prop}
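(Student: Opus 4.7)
Plan:

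I would use the derivative formula (20), writing $f'=A+B$ where $A$ collects the $\ell=i$ summands and $B$ the $\ell<i$ summands. Since $\nu_i\bigl(\tilde{\mathbf{Q}}^{\lambda_j}\bigr)=0$ and, for $\ell\le i$, $\nu_i(Q_\ell'/Q_\ell)=\alpha_\ell$, each term of $B$ has $\nu_i$-value
\[
v(\tilde b_j)+v\bigl(\lambda_j(Q_\ell)\bigr)+\alpha_\ell\;\ge\;\nu_i(f)+\alpha_\ell;
\]
by the hypothesis $\alpha_\ell>\alpha_i$ for all $\ell<i$, this forces $\nu_i(B)>\nu_i(f)+\alpha_i$. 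So all of the action happens in $A$.

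Next I factor $A=\tilde Q_i'\cdot D$, where
\[
D:=\sum_j \tilde b_j\,\lambda_j(Q_i)\,\tilde{\mathbf{Q}}^{\lambda_j-\delta_{Q_i}},
\]
and reorganise $D$ by the exponent of $\tilde Q_i$, obtaining $D=\sum_{l\ge 1}l\,\tilde f_{il}\,\tilde Q_i^{l-1}$, where $\tilde f_{il}$ is the coefficient of $\tilde Q_i^l$ in the $\tilde Q_i$-expansion of $f$. Since every $l\tilde f_{il}$ has degree $<\deg Q_i$, this is already in $Q_i$-expansion form, so
\[
\nu_i(D)=\min_{l\ge 1}\bigl(v(l)+\nu(\tilde f_{il})\bigr)\ge \min_{l\ge 0}\nu(\tilde f_{il})=\nu_i(f),
\]
with equality precisely when some $l\ge 1$ satisfies $v(l)=0$ and $\nu(\tilde f_{il})=\nu_i(f)$, i.e., when $p\nmid S_i(f)$. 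Because $\nu_i(\tilde Q_i')=\alpha_i$, we have $\nu_i(A)=\alpha_i+\nu_i(D)$, and combined with the bound $\nu_i(B)>\nu_i(f)+\alpha_i$ this proves the biconditional $\nu_i(f')-\nu_i(f)=\alpha_i\iff p\nmid S_i(f)$.

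For the description of $S_i(f')$ when the equality holds, the minimal-value contributions to $f'$ come entirely from $A$, since $\nu_i(B)>\nu_i(f')$. The minimal-value terms of $D$ are exactly those indexed by $l\in S_i(f)\setminus\{0\}$ with $p\nmid l$, contributing at $\tilde Q_i^{l-1}$. Since $\tilde Q_i'$ has degree $<\deg Q_i$, its full expansion has $\tilde Q_i$-exponent $0$, so multiplication by $\tilde Q_i'$ preserves these exponents, yielding $S_i(f')=\{l-1\mid l\in S_i(f)\setminus\{0\},\ p\nmid l\}$.

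The main obstacle I anticipate is in this final step: although $\tilde Q_i'$ has $\tilde Q_i$-exponent $0$ in its full expansion, the individual product $\tilde f_{il}\tilde Q_i'$ has degree that may exceed $\deg Q_i$, so reducing to the $Q_i$-expansion of $A$ causes each such term to split between a $Q_i^{l-1}$ and a $Q_i^l$ coefficient. One must verify that the "leakage" into the $Q_i^l$-coefficient does not achieve the minimal value $\nu_i(f)+\alpha_i$ and thereby produce spurious elements of $S_i(f')$ beyond the claimed set. This is where the hypothesis $\alpha_{i'}>\alpha_i$ for all $i'<i$ is used a second time: it forces the comparison between the two possible contributions to favour the $Q_i^{l-1}$ side, giving the precise description of $S_i(f')$.
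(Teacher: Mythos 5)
Your handling of the equivalence itself is correct, and it is essentially the paper's argument in a different dress: your split $f'=A+B$ is the same as the paper's decomposition $f'=\sum_l f_l'Q_i^l+Q_i'\sum_l l f_lQ_i^{l-1}$ coming from the $Q_i$-expansion $f=f_0+f_1Q_i+\ldots+f_rQ_i^r$, and the one genuine simplification is that you evaluate the $A$-part by multiplicativity of the truncation, $\nu_i(A)=\nu_i(\tilde Q_i')+\nu_i(D)=\alpha_i+\nu_i(D)$, instead of the paper's coefficient-by-coefficient estimate; together with $\nu_i(B)>\nu_i(f)+\alpha_i$ (which uses \eqref{equationimport}, as in Proposition \ref{lowerbound}) this does give the biconditional. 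Like the paper, you use silently that $\nu_i$ is a valuation which agrees with $\nu$ on polynomials of degree less than $\deg(Q_i)$ and on the $Q_\ell$ with $\ell\le i$; that is acceptable.

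The gap is in the ``moreover'' statement \eqref{equacaop1psenaow}, and it is exactly the point you flag at the end but do not close. To read off $S_i(f')$ you must pass from $A=\tilde Q_i'D$ to the genuine $Q_i$-expansion of $f'$, i.e.\ write $f_lQ_i'=a_l+b_lQ_i$ and control the term $lb_l$ that leaks into the coefficient of $Q_i^{l}$. Your proposed fix --- that hypothesis \eqref{equationimport} ``forces the comparison to favour the $Q_i^{l-1}$ side'' --- is unproved and is not the right mechanism: that hypothesis only bounds the contributions of the derivatives $f_l'$ (your $B$), and it imposes no constraint on $\nu(b_l)$. From $\nu_i(f_lQ_i')=\nu(f_l)+\nu(Q_i')$ one only gets $\nu(b_l)\ge\nu(f_l)+\alpha_i$; what is needed is the strict inequality $\nu_i(a_l)=\nu_i(f_lQ_i')<\nu_i(b_lQ_i)$, which is a property of key polynomials valid independently of any plateau condition (\cite{Novspivkeypol}, Lemma 2.3 (iii)), and it is precisely what the paper invokes here. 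Without it the conclusion genuinely fails to follow in both directions: if, say, $l$ and $l-1$ both lie in $S_i(f)$ with $p\nmid l$ and $p\nmid(l-1)$, the coefficient of $Q_i^{l-1}$ in $f'$ receives both $la_l$ and $(l-1)b_{l-1}$, and with equality instead of strictness these could share the minimal value and cancel (destroying membership of $l-1$ in $S_i(f')$), while leakage of value exactly $\nu_i(f)+\alpha_i$ could also create indices of $S_i(f')$ outside the claimed set. So the last step of your plan needs the cited product lemma (or a proof of it), not a second use of \eqref{equationimport}.
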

\begin{proof}
Let
\begin{equation}\label{eppansnfoposjjfh}
f=f_0+f_1Q_i+\ldots+f_rQ_i^r
\end{equation}
be the $Q_i$-expansion of $f$. Since $\deg(f_j)<\deg(Q_i)$, all the key polynomials appearing in the full expansion of the $f_j$'s have degrees strictly smaller than $\deg(Q_i)$. In particular, if we define
\[
\gamma:=\min\{\alpha_l\mid Q_l\mbox{ appears in the expansion of some }f_j\},
\]
then by the previous result and our assumption we have
\[
\nu\left(f'_j\right)-\nu(f_j)\geq \gamma>\alpha_i\mbox{ for every }j, 0\leq j\leq r.
\]
By \eqref{eppansnfoposjjfh}, we have
\begin{equation}\label{equauti.lmsni1}
f'=f_0'+f_1Q_i'+f_1'Q_i+\ldots+f_r'Q^r+rf_rQ_i'Q_i^{r-1}.
\end{equation}
For every $j$, $0\leq j\leq r$, we have
\begin{equation}\label{equauti.lmsni2}
\nu_i\left(f'_jQ_i^j\right)=\nu_i\left(f_jQ_i^j\frac{f'_j}{f_j}\right)>\nu_i(f)+\alpha_i.
\end{equation}
On the other hand, for every $j$, $1\leq j\leq r$, we have
\begin{equation}\label{equauti.lmsni3}
\nu_i\left(jf_jQ_i'Q_i^{j-1}\right)=v(j)+\nu\left(f_jQ_i^j\right)+\nu\left(Q'_i\right)-\nu(Q_i)\geq \nu_i(f)+\alpha_i.
\end{equation}
The last inequality is an equality if and only if $j\in S_i(f)$ and $p\nmid j$. Consequently, if $p\mid S_i(f)$, then by \eqref{equauti.lmsni1}, \eqref{equauti.lmsni2} and \eqref{equauti.lmsni3} we have
\[
\nu_i(f')-\nu_i(f)>\alpha_i.
\]
Suppose now that $p\nmid S_i(f)$. For each $j$, $1\leq j\leq r$, let
\[
f_jQ_i'=a_j+b_jQ_i
\]
be the $Q_i$-expansion of $f_jQ_i'$. By \cite[Lemma 2.3 \textbf{(iii)}]{Novspivkeypol}, we have
\begin{equation}\label{equantomark1}
\nu_i(a_j)=\nu_i(f_jQ_i')<\nu_i(b_jQ_i).
\end{equation}
Let
\[
f'=F_0+F_1Q_i+\ldots+F_rQ_i^r
\]
be the $Q_i$-expansion of $f'$. Then $F_r=f_r'+rb_{r}$ and consequently
\begin{equation}\label{eqabparar}
\nu_i\left(F_rQ_i^r\right)>\nu_i(f)+\alpha_i.
\end{equation}
For any $l$, $1\leq l\leq r$, if $l\notin S_i(f)$ or $l\mid p$, then by \eqref{equauti.lmsni1}, \eqref{equauti.lmsni2}, \eqref{equauti.lmsni3} and \eqref{equantomark1} we have
\begin{equation}\label{quaandonavale}
\nu_i\left(F_{l-1}Q_i^{l-1}\right)>\nu_i(f)+\alpha_i.
\end{equation}
On the other hand, if $l\in S_i(f)\setminus\{0\}$ and $p\nmid l$, then
\begin{equation}\label{termmsbofnsnjdjlkmsdb}
\nu_i\left(la_lQ_i^{l-1}\right)=\nu_i(f)+\alpha_i.
\end{equation}
Since $p\nmid S_i(f)$, we deduce that
\[
\nu_i\left(f'\right)=\nu_i(f)+\alpha_i.
\]
Moreover, by \eqref{eqabparar}, \eqref{quaandonavale} and \eqref{termmsbofnsnjdjlkmsdb} we deduce that
\[
S_i\left(f'\right)=\{l-1\mid l\in S_i(f)\setminus\{0\}\mbox{ and }p\nmid l\}.
\]
\end{proof}

\begin{Prop}\label{Propprimeiroplau}
Assume that $\textbf{Q}_n$ is the first minimizing plateau and set
\[
I_n=\{i\in I\mid Q_i\in \textbf{Q}_n\}.
\]
Then there exists $i\in I_n$ such that for $j,k\in I_n$, if $i\leq j<k$, then
\[
\alpha_k<\alpha_j.
\]
\end{Prop}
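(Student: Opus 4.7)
The plan is to choose an index $i_0 \in I_n$ with certain minimality properties and to propagate strict descent of the $\alpha$'s from $i_0$ by a Leibniz-style computation based on Proposition~\ref{lemamvery}; a transfinite induction handles the general $j$. Since $\textbf{Q}_n = \textbf{Q}_{n_0}$ is the first minimizing plateau and $\alpha$ has no minimum, the values $\{\alpha_i \mid i \in I_n\}$ generate $\alpha$ without attaining its infimum, while for each $n' < n$ the segment $\alpha_{n'}$ strictly contains $\alpha$. This furnishes, for each $n' < n$, a lower bound $\gamma_{n'} \in vL$ with $\alpha_\ell \ge \gamma_{n'}$ for every $\ell$ with $Q_\ell \in \textbf{Q}_{n'}$; set $\gamma^* := \min_{n' < n}\gamma_{n'}$. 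Using the no-minimum property of $\alpha$, one picks $i_0 \in I_n$ with $\alpha_{i_0} < \gamma^*$ and $\alpha_{i_0} < \alpha_{i'}$ for every $i' \in I_n$ with $i' < i_0$ (``new minima'' in $I_n$ are cofinal). Then $\alpha_{i'} > \alpha_{i_0}$ for every $i' < i_0$, so Proposition~\ref{lemamvery} applies at $i_0$.

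The core computation runs as follows. For $k \in I_n$ with $k > i_0$, both $Q_k$ and $Q_{i_0}$ are monic of degree $n$, giving the $Q_{i_0}$-expansion $Q_k = (Q_k - Q_{i_0}) + Q_{i_0}$ with $\deg(Q_k - Q_{i_0}) < n$. Granted $\nu(Q_k) > \nu(Q_{i_0})$ (discussed below), this forces $\nu(Q_k - Q_{i_0}) = \nu(Q_{i_0})$, so $\nu_{i_0}(Q_k) = \nu(Q_{i_0})$ and $1 \in S_{i_0}(Q_k)$. Since $p \nmid 1$, Proposition~\ref{lemamvery} yields $\nu_{i_0}(Q_k') = \nu_{i_0}(Q_k) + \alpha_{i_0} = \nu(Q_{i_0}')$. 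As $\deg Q_k' < n$, we have $\nu(Q_k') = \nu_{i_0}(Q_k') = \nu(Q_{i_0}')$, whence $\alpha_k = \nu(Q_k') - \nu(Q_k) < \nu(Q_{i_0}') - \nu(Q_{i_0}) = \alpha_{i_0}$.

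A transfinite induction on $j \in I_n$, $j \ge i_0$, closes out the argument: at each step I would verify that $\alpha_{i'} > \alpha_j$ for every $i' < j$ (so Proposition~\ref{lemamvery} applies at $j$) and deduce $\alpha_k < \alpha_j$ for every $k > j$ in $I_n$ by repeating the computation above with $j$ in place of $i_0$. The inductive hypothesis handles $i' \in I_n$ with $i_0 \le i' < j$, while $\alpha_{i'} > \alpha_{i_0} > \alpha_j$ and $\alpha_{i'} \ge \gamma^* > \alpha_{i_0} > \alpha_j$ take care of $i' < i_0$ in $I_n$ and of $i' \notin I_n$ respectively. The hard part is justifying $\nu(Q_k) > \nu(Q_j)$ strictly for $k > j$ in $I_n$: this reflects the strict monotonicity of $\nu(Q_i)$ along a cofinal subsequence of $I_n$ that arises from the strictly increasing $\epsilon$-values of key polynomials in the plateau, and restricting to such a cofinal subsequence is enough to obtain the statement of the proposition.
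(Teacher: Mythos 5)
Your core argument is the same as the paper's: choose a base index in $I_n$ whose $\alpha$-value is a strict running minimum (and lies below the $\alpha$-values coming from the plateaus of degree $<n$, which is where ``first minimizing plateau'' enters), apply Proposition \ref{lemamvery} to the $Q_{i_0}$-expansion $Q_k=(Q_k-Q_{i_0})+Q_{i_0}$ to get $\nu_{i_0}(Q_k')-\nu_{i_0}(Q_k)=\alpha_{i_0}$, use $\deg(Q_k')<\deg(Q_{i_0})$ together with $\nu_{i_0}(Q_k)<\nu(Q_k)$ to conclude $\alpha_k<\alpha_{i_0}$, and finish by transfinite induction. Two smaller remarks: the segments $\alpha_{n'}$ for $n'<n$ are \emph{contained} in $\alpha$ (it is the inclusion of ideals $I_{n'}\subsetneq I_\alpha$ that is strict), and you neither need nor justify that strict running minima are cofinal in $I_n$; a single such index below your $\gamma^*$ suffices, and it exists by taking, for any $i_1\in I_n$ with $\alpha_{i_1}<\gamma^*$, the first index $r\le i_1$ with $\alpha_r\le\alpha_{i_1}$.

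The genuine gap is your last step. The strict inequality $\nu(Q_k)>\nu(Q_j)$ that you call the hard part is \emph{equivalent} to the inequality $\nu_j(Q_k)<\nu(Q_k)$ that the paper invokes for $j<k$ (if $\nu(Q_j)\ge\nu(Q_k)$ then $\nu(Q_k-Q_j)\ge\nu(Q_k)$ and hence $\nu_j(Q_k)=\nu(Q_k)$, and conversely $\nu(Q_j)<\nu(Q_k)$ forces $\nu(Q_k-Q_j)=\nu(Q_j)$, so $\nu_j(Q_k)=\nu(Q_j)<\nu(Q_k)$), so it must be proved or cited, not sidestepped. Restricting to a cofinal subsequence of $I_n$ along which $\nu(Q_i)$ strictly increases does not yield the statement: the proposition asserts $\alpha_k<\alpha_j$ for \emph{all} pairs $j<k$ in $I_n$ beyond some $i$, and, worse, your transfinite induction breaks down, because Proposition \ref{lemamvery} at a subsequence element $j$ requires $\alpha_{i'}>\alpha_j$ for \emph{every} $i'<j$ in $I$, including the indices of $I_n$ skipped by the subsequence, about which your restricted argument gives no information. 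The correct repair is to establish $\nu(Q_j)<\nu(Q_k)$ (equivalently $\nu_j(Q_k)<\nu(Q_k)$) for all $j<k$ in $I_n$, as the paper does by appealing to standard properties of truncations at key polynomials; with that in hand your computation is exactly the paper's proof.
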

\begin{proof}
Since $\textbf{Q}_n$ is the first minimizing plateau, there exists $i\in I_n$ such for every $i'<i$ we have $\alpha_{i'}>\alpha_i$. We claim that this $i$ satisfies the conclusion of the Lemma. Indeed, for every $j\in I_n$, $j>i$, the $Q_i$-expansion of $Q_j$ is
\[
Q_j=Q_i-(Q_i-Q_j).
\]
In particular, $S_i(Q_j)=\{0,1\}$. By the previous result we have
\[
\nu_i(Q_j)-\nu_i(Q'_j)=\alpha_i.
\]
Since $\deg(Q'_j)<\deg(Q_i)$ we have $\nu_i(Q'_j)=\nu(Q'_j)$. On the other hand, since $i<j$, we have $\nu_i(Q_j)<\nu(Q_j)$. Hence
\[
\alpha_j<\alpha_i.
\] 
The result follows now by transfinite induction.
\end{proof}

\subsection{The module of K\"ahler differentials}
Let $\textbf{X}=\{X_i\}_{i\in I^*}$ be a set of indeterminates and let 
\[
f\in\VR_K[\textbf{X}]:= \VR_K[X_i\mid i\in I^*].
\]
This means that there exist $\lambda_1,\ldots,\lambda_s\in \N^\textbf{X}$ and $b_1,\ldots,b_s\in \VR_K$ such that
\begin{equation}
f=\sum_{j=1}^sb_j\textbf{X}^{\lambda_j}.\label{eq:fexpansion}
\end{equation}
We denote by
\begin{equation}
f_{\textbf{Q}}:=f\left(\textbf{Q}\right)=\sum_{j=1}^sb_j\textbf{Q}^{\lambda_j}\mbox{ and }f_{\tilde{\textbf{Q}}}:=f\left(\tilde{\textbf{Q}}\right)=\sum_{j=1}^sb_j\tilde{\textbf{Q}}^{\lambda_j}.\label{eq:fQexpansion}
\end{equation}
Consider the natural maps $\VR_K[\textbf{X}]\overset{\bf e}\lra K[x]\overset{\text{ev}_\eta}\lra L$, $f\mapsto f_{\tilde{\bf Q}}\mapsto f_{\tilde{\bf Q}}(\eta)$. By Proposition \ref{Corfinitelgenalg}, we have
$(\text{ev}_\eta\circ{\bf e})(\VR_K[\textbf{X}])=\VR_L$.
\medskip

\noindent{\bf Notation.} For $f\in K[\textbf{X}]$, let $\mu_0(f)$ denote the minimum of the values of the monomials of $f$.
\begin{Obs}\label{mu0=nui} In the notation of \eqref{eq:fexpansion}, we have
\begin{equation}\label{eq:mu0=nui}
\mu_0(f)=\min\limits_{j\in\{1,\dots,s\}}\left\{\nu\left(b_j\textbf{Q}^{\lambda_j}\right)\right\}.
\end{equation}
\end{Obs}
Let $\mathcal I=\text{Ker}(\text{ev}_\eta\circ{\bf e})$, so that
\[
\VR_L\simeq \VR_K[\textbf{X}]/\mathcal I.
\]
Let $\{f_j\}_{j\in J}$ be a set of generators for $\mathcal I$. In this case,
\[
\Omega_{\VR_L/\VR_K}\simeq\frac{\bigoplus\limits_{i\in I^*}\VR_LdX_i}{(df_j\mid j\in J)}
\]
(see \cite{Mat}, Chapter 10, Section 26, (26.E), Example 1 on pp. 183--184 and (26.I), Theorem 58 on pp. 187--188 --- the second fundamental exact sequence --- where we take $A=\VR_K[\bf X]$, $B=\VR_L$ and $\mathfrak m=\mathcal I$).

Let $i_{\text{max}}:=\max I$, so that $g=Q_{i_{\text{max}}}$. For $i,\ell\in I$, $i<\ell$, let
\[
\tilde Q_\ell=\sum\limits_{j=1}^{s_\ell}b_{\ell ij}\tilde{\textbf{Q}}^{\lambda_j}
\]
 denote the full $i$-th expansion of $Q_\ell$. Fix an element $b_{\ell i}\in K$ such that
\[
v(b_{\ell i})=-\min\limits_jv(b_{\ell ij}).
\]
Note that $v(b_{\ell i})>0$. Let
\[
\mathcal I_1=\left(\left.b_{\ell i}\left(X_\ell-\sum\limits_{j=1}^{s_{\ell i}}b_{\ell ij}{\textbf{X}}^{\lambda_j}\right)\ \right|\  i,\ell\in
I\setminus\{i_{\text{max}}\},i<\ell\right).
\]
For each $i\in I\setminus\{i_{\text{max}}\}$ fix an element $h_i\in K$ such that
\begin{equation}
v(h_i)=-\nu_i(g).\label{eq:minusthevalue}
\end{equation}
Let
\[
\mathcal I_2=\left(\left.h_i\sum\limits_{j=1}^{s_{i_{\text{max}}}}b_{i_{\text{max}}ij}{\textbf{X}}^{\lambda_j}\ \right|\  i\in
I\setminus\{i_{\text{max}}\}\right).
\]
\begin{Lema}\label{generatorsCalI}
Assume that $Q_\ell$ is $Q_i$-monic for every $i,\ell\in I$ with $i<\ell$. We have
\begin{equation}
\mathcal I=\mathcal I_1+\mathcal I_2.\label{eq:I1+I2} 
\end{equation}
\end{Lema}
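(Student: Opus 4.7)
The inclusion $\mathcal I_1+\mathcal I_2\subseteq\mathcal I$ is obtained by direct evaluation. A generator $b_{\ell i}\bigl(X_\ell-\sum_j b_{\ell i j}\mathbf X^{\lambda_j}\bigr)$ of $\mathcal I_1$ has $\mathbf e$-image $b_{\ell i}(\tilde Q_\ell-\tilde Q_\ell)=0$ in $K[x]$, this being the very definition of the full $i$-th expansion of $\tilde Q_\ell$; a generator $h_i\sum_j b_{i_{\max}ij}\mathbf X^{\lambda_j}$ of $\mathcal I_2$ has $\mathbf e$-image $h_i\tilde g$, which vanishes at $x=\eta$. The scalars $b_{\ell i}$ and $h_i$ were calibrated precisely so that the coefficients of these generators lie in $\mathcal O_K$.

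For the reverse inclusion, my plan is to factor the surjection defining $\mathcal I$ through the intermediate $\mathcal O_K$-algebra $A:=\mathbf e(\mathcal O_K[\mathbf X])=\mathcal O_K[\tilde{\mathbf Q}]\subseteq K[x]$,
\[
\mathcal O_K[\mathbf X]\xrightarrow{\;\mathbf e\;}A\xrightarrow{\;\mathrm{ev}_\eta\;}\mathcal O_L,
\]
the right-hand surjectivity being Proposition \ref{Corfinitelgenalg}. The equality $\mathcal I=\mathcal I_1+\mathcal I_2$ will then reduce to two claims: \textbf{(A)} $\ker(\mathbf e|_{\mathcal O_K[\mathbf X]})=\mathcal I_1$, and \textbf{(B)} $\mathbf e(\mathcal I_2)=(g)\cap A$. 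Indeed, these together yield $\mathcal I=\mathbf e^{-1}\bigl((g)\cap A\bigr)=\mathbf e^{-1}\bigl(\mathbf e(\mathcal I_2)\bigr)=\mathcal I_2+\ker\mathbf e=\mathcal I_1+\mathcal I_2$.

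For Claim (A) I would argue by transfinite induction on $N(f):=\sup\mathrm{supp}(f)$, regarded as an element of $I^*$ (or the symbol $-\infty$ if $\mathrm{supp}(f)=\emptyset$). The base case is immediate: $f\in\mathcal O_K$ forces $f=f_{\tilde{\mathbf Q}}=0$. In the inductive step with $\ell:=N(f)$, I expand $f$ $X_\ell$-adically as $f=\sum_m X_\ell^m f_m$ and use the iterated congruence $b_{\ell i}^m X_\ell^m\equiv b_{\ell i}^m\bigl(\sum_j b_{\ell ij}\mathbf X^{\lambda_j}\bigr)^m\pmod{\mathcal I_1}$ for carefully chosen $i<\ell$ in $I^*$; the $Q_i$-monic hypothesis pins down the leading coefficient $a_i^{r}/a_\ell$ (with $r=\deg Q_\ell/\deg Q_i$) in the full $i$-th expansion of $\tilde Q_\ell$, providing enough valuation control to reduce $f$ modulo $\mathcal I_1$ to an element $f'\in\mathcal O_K[\mathbf X]$ with $N(f')<\ell$ and $f'_{\tilde{\mathbf Q}}=0$; the induction hypothesis then concludes. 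For Claim (B), since $\mathbf e(\mathcal I_2)=g\cdot\sum_{i\in I^*}h_iA$, it suffices to show that any $H\in K[x]$ with $gH\in A$ lies in $\sum_i h_iA$. I would expand $H$ via a full $i$-th expansion for $i\in I^*$ with $\deg Q_i>\deg H$, and use that $v(h_i)=-\nu_i(g)$, together with $\nu_i(g)\to\nu(g)=\infty$ as $i\to i_{\max}$, to absorb each coefficient of $H$ into $h_i\cdot\mathcal O_K$ for an appropriate choice of $i$.

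\textbf{Main obstacle.} The technical heart is the inductive step in Claim (A). Since $v(b_{\ell i})$ is strictly positive in general, the congruence $b_{\ell i}X_\ell\equiv b_{\ell i}\sum_j b_{\ell ij}\mathbf X^{\lambda_j}\pmod{\mathcal I_1}$ does \emph{not} allow one to substitute $X_\ell$ inside an arbitrary monomial $c\mathbf X^{\mu}$ of $f$ unless $v(c)\ge\mu(X_\ell)\cdot v(b_{\ell i})$. Making the substitution work uniformly forces the auxiliary index $i<\ell$ to be chosen monomial-by-monomial, and the $Q_i$-monic hypothesis is precisely what guarantees that such a choice is always available; in its absence, additional ``denominator'' generators would have to be adjoined to $\mathcal I_1$, which is exactly why the paper postpones the non-$Q_i$-monic case to future work.
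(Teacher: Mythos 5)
Your reduction of the hard inclusion to the two claims (A) $\ker(\mathbf e|_{\VR_K[\textbf X]})=\mathcal I_1$ and (B) $gK[x]\cap A=\mathbf e(\mathcal I_2)$ is a correct reformulation (both claims are equivalent, jointly, to the lemma), and your verification of $\mathcal I_1+\mathcal I_2\subseteq\mathcal I$ agrees with the paper's. The genuine gap is that your sketches of (A) and (B) do not go through, and they are where all the work lies. For (A): the congruence $b_{\ell i}X_\ell\equiv b_{\ell i}\sum_j b_{\ell ij}\textbf X^{\lambda_j}\ (\mathrm{mod}\ \mathcal I_1)$ lets you eliminate $X_\ell$ from a monomial $c\,X_\ell^m\textbf X^{\mu}$ only if $v(c)\ge m\,v(b_{\ell i})$ for some $i<\ell$, and $v(b_{\ell i})=\nu(Q_\ell)-\nu_i(Q_\ell)$ is in general \emph{bounded below by a fixed positive element} as $i$ ranges over the indices below $\ell$ (this is exactly the limit-key-polynomial/defect situation the paper is aimed at, where $\sup_{i<\ell}\nu_i(Q_\ell)<\nu(Q_\ell)$). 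Elements of $\ker(\mathbf e)\cap\VR_K[\textbf X]$ can perfectly well have monomials containing the top variable whose coefficients are units or have very small value --- already $X_\ell\cdot b_{\ell' i'}(X_{\ell'}-\sum_j b_{\ell' i'j}\textbf X^{\lambda_j})\in\mathcal I_1$ is such an element when $v(b_{\ell' i'})$ is smaller than every $v(b_{\ell i})$ --- so no monomial-by-monomial choice of $i$ exists; the $Q_i$-monic hypothesis does not rescue this. One must exploit cancellation among monomials globally, and in addition your induction cannot pass from $\mathbf e(f)=0$ to $\mathbf e(f_m)=0$ for the $X_\ell$-adic coefficients $f_m$, because $\deg_x\mathbf e(f_m)$ is not bounded by $\deg Q_\ell$, so vanishing does not descend to coefficients. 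This is precisely what the paper's machinery of $(i,\ell)$-reductions and buildings is for: reductions (which create denominators) are performed only in $K[\textbf X]$, one factors $f^{\mathrm{tred}}=(g^{(i)})^nr$ in $K[X_0]$, and then rebuilds, using that building never decreases $\mu_0$, that $\mu_0(f_i)=\nu_i(f(x))$ for $f\in K[X_0]$, and the uniqueness of the total $i$-building for a fixed set of variables, to get the integral congruence $f\equiv(g^{(i)}_i)^nr_i\ (\mathrm{mod}\ \mathcal I_1)$.

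For (B), the assertion ``$\nu_i(g)\to\nu(g)=\infty$ as $i\to i_{\max}$'' on which your absorption argument rests is false: each $\nu_i(g)$, $i\in I^*$, is a finite element of $vL$, and the family $\{\nu_i(g)\}_{i\in I^*}$, though increasing, is typically bounded in $vL$ (e.g.\ $\nu_n(g)=p\,\nu(x-a_n)$ in the Kummer example of Section 4, and similarly whenever $g$ is a limit key polynomial over a plateau). The actual mechanism is the normalization $v(h_i)=-\nu_i(g)$, which makes the total $i$-building of $h_ig$ have $\mu_0=0$; and one still needs a canonical-form statement to compare an \emph{arbitrary} representation witnessing $gH\in A=\VR_K[\tilde{\textbf Q}]$ with the chosen full $i$-th expansions, since membership in $A$ is not read off from a single expansion. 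As written, then, (A) and (B) remain unproved, so your text is an outline of a reasonable strategy rather than a proof; the missing ingredient is precisely the building/uniqueness apparatus (or some substitute for it) that the paper develops.
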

\begin{proof} Consider the commutative diagram
\begin{equation}
\xymatrix{\VR_K[\textbf{X}]&&\longrightarrow&&\VR_L\\
\downarrow&&&&\downarrow\\
K[\textbf{X}]&\overset{\bf e}\longrightarrow&K[x]&\overset{\text{ev}_\eta}\lra&L}\label{eq:commutative}
\end{equation}
Let $\bar{\mathcal I}:=\ker\,(\text{ev}_\eta\circ{\bf e})$. Since the vertical arrows in \eqref{eq:commutative} are injections, we have
\begin{equation}
\mathcal I=\bar{\mathcal I}\cap\VR_K[\textbf{X}].\label{eq:tildeIcontractstoI}
\end{equation}
We claim that
\begin{equation}
\bar{\mathcal I}=(\mathcal I_1+(g(X_0)))K[\textbf{X}].\label{eq:generatorsItilde}
\end{equation}
Indeed, we have
\begin{equation}
\mathcal I_1K[\textbf{X}]=\ker\,(\bf e)\label{eq:Kerbolde}
\end{equation}
and
\begin{equation}
(g(x))K[x]=\ker\,(\text{ev}_\eta).\label{eq:Keeta}
\end{equation}
Moreover, every element $f\in K[\bf X]$ is congruent modulo $\mathcal I_1K[\textbf{X}]$ to a unique element $\bar f\in K[X_0]$ and restricting $\bf e$ to $K[X_0]$ induces an  isomorpshim
\begin{equation}
{\bf e}\left|_{K[X_0]}\right.:K[X_0]\cong K[x]\label{eq:isomorphismKX0Kx}
\end{equation}
that maps $X_0$ to $x$ and $g(X_0)$ to $g(x)$. Formulae \eqref{eq:Kerbolde}--\eqref{eq:isomorphismKX0Kx} show that
\begin{equation}
\bar{\mathcal I}\supset(\mathcal I_1+(g(X_0)))K[\textbf{X}],\label{eq:righthandsidecontainedinleft}
\end{equation}
in particular,
\begin{equation}
\mathcal I_1K[\textbf{X}]\subset\bar{\mathcal I}.\label{eq:I1inIbar}
\end{equation}
To prove the opposite inclusion in \eqref{eq:righthandsidecontainedinleft}, consider an element
\begin{equation}
f\in\bar{\mathcal I}.
\end{equation}
By \eqref{eq:Keeta}, \eqref{eq:isomorphismKX0Kx} and \eqref{eq:I1inIbar}, we have $\bar f\in(g(X_0))K[X_0]$. This proves the equality \eqref{eq:generatorsItilde}.

\noindent{\bf Notation.} For $i\in I^*$, let $\textbf{X}_{<i}=\{X_{i'}\}_{\substack{i'\in I^*\\i'<i}}$,
$\textbf{X}_{\le i}=\{X_{i'}\}_{\substack{i'\in I^*\\i'\le i}}$, $\textbf{X}_{\ge i}=\{X_{i'}\}_{\substack{i'\in I^*\\i'\ge i}}$.

Let $g^{(i)}=h_ig(X_0)$.

For a strictly positive natural number $n$ such that $\textbf{Q}_n\ne\emptyset$, put
\[
n_+=\min\left\{n'\in\N\ \left|\ n'>n,\textbf{Q}_{n'}\ne\emptyset\right.\right\}.
\]
\smallskip

For $f\in K[\textbf{X}]$ and $ i,\ell\in I\setminus\{i_{\text{max}}\},i<\ell$, we will now define two operations, inverse to each other: the $(i,\ell)$-reduction and the $(i,\ell)$-building.

Let $Q_{\ell i}=b_{\ell i}\sum\limits_{j=1}^{s_{\ell i}}b_{\ell ij}{\textbf{X}}^{\lambda_j}$. Note that $Q_{\ell i}$ is a monic polynomial in $X_i$ whose coefficients are polynomials in $\textbf{X}_{<i}$ (here is where we are using the assumption that $Q_j$ is $Q_i$-monic for $i<\ell$).

\begin{Def} The $(i,\ell)${\bf-reduction} of $f$ is the polynomial $f_{\ell i}^{\text{red}}$ obtained from $f$ by substituting
$\frac{Q_{\ell i}}{b_{\ell i}}$ for $X_\ell$. The $(i,\ell)${\bf-building} of $f$  is the polynomial $f_{\ell i}^{\text{bdg}}$  obtained from $f$ by substituting $b_{\ell i}X_\ell$ for $Q_{\ell i}$ in the $Q_{\ell i}$-expansion of $f$.
\end{Def}

After finitely many applications of $(i,\ell)$-reductions with different $i$ and $\ell$, every polynomial $f$ can be turned into a polynomial $f^{\text{tred}}(X_0)\in K[X_0]$. We have
\begin{equation}
f\equiv f^{\text{tred}}(X_0)\mod\,\mathcal I_1K[\textbf{X}].\label{eq:fcongtotredmodI}
\end{equation}
Similarly, for a fixed $i\in I^*$, after finitely many applications of $(i',\ell)$-buidings with different $\ell\le i$ and $i'<\ell$, every polynomial $f\in K[\textbf{X}]$ can be turned into a polynomial $f_i\in K[\textbf{X}]$ having the following properties.
\begin{enumerate}
\item[(i)] For every $n\in\N_{>0}$, $n\le\deg_xQ_i$ there is at most one $\ell\in I^*$, $\ell\le i$, such that $\deg_xQ_\ell=n$ and the variable $X_\ell$ appears in $f_i$.
\item[(ii)] If $\ell<i$ then the variable $X_\ell$ appears in $f_i$ with exponent strictly less than $\frac{n_+}{\deg_xQ_\ell}$.
\item[(iii)] Write $f_i=\sum\limits_{b\in\N_{\ge0}^{I^*_{>i}}}f_{\gamma}({\bf X}_{\le i}){\bf X}^\gamma_{>i}$, where $f_{\gamma}({\bf
X}_{\le i})\in K[{\bf X}_{\le i}]$. Then
\[
\nu_i\left((f_\gamma)_{\tilde{\bf Q}}\right)=\nu\left((f_\gamma)_{\tilde{\bf Q}}\right).
\]
\end{enumerate}
The polynomial $f_i$ will be referred to as a {\bf total $i$-building} of $f$.
\begin{Obs}\label{5remarks}
\begin{enumerate}
\item[(1)] We have
\begin{equation}
f\equiv f_i\mod\,\mathcal I_1K[\bf X].\label{eq:bdgmodI1KX}
\end{equation}
\item[(2)] The $(i',\ell)$-building operation does not decrease the quantity $\mu_0(f)$. In particular, if $f\in\VR_K[\textbf{X}]$ then $f_i\in\VR_K[\textbf{X}]$ and the congruence \eqref{eq:bdgmodI1KX} becomes
\begin{equation}
f\equiv f_i\mod\,\mathcal I_1.\label{eq:bdgmodI1}
\end{equation}
\item[(3)] In the special case when $f\in K[X_0]$, a total $i$-building of $f$ can be obtained by substituting $X_\ell$ for $\tilde Q_\ell$ for each $\tilde Q_\ell$ appearing in the full $i$-th expansion of $f(x)$.
\item[(4)] The total $i$-building of a given $f$ is not, in general, unique. However, it becomes unique if we fix the data
$\{\ell\in I^*\ |\ \ell\le i,X_\ell\text{ appears in }f_i\}$.
\item[(5)] By Remark \ref{mu0=nui}, if $f\in K[{\bf X}_{\le i}]$ then $\mu_0(f_i)=\nu_i(f(x))$.
\end{enumerate}
\end{Obs}
To prove the  inclusion $\supset$ in \eqref{eq:I1+I2}, first note that, obviously,
\begin{equation}
\mathcal I_1\subset\mathcal I.\label{eq:I1inI}
\end{equation}
To prove that $\mathcal I_2\subset\mathcal I$, fix an $i\in I^*$ and consider the element
\[
h_i\sum\limits_{j=1}^{s_{{i_{\text{max}}i}}}b_{i_{\text{max}}ij}{\textbf{X}}^{\lambda_j}\in\mathcal I_2.
\]
We have
\begin{equation}
g^{(i)}\in K[X_0]\cap\bar{\mathcal I}\label{eq:inI}
\end{equation}
and
\begin{equation}
h_i\sum\limits_{j=1}^{s_{{i_{\text{max}}i}}}b_{i_{\text{max}}ij}{\textbf{X}}^{\lambda_j}=g_i^{(i)}.\label{eq:totalibdgofg}
\end{equation}
By \eqref{eq:minusthevalue}, we have
$\mu_0\left(h_i\sum\limits_{j=1}^{s_{{i_{\text{max}}i}}}b_{i_{\text{max}}ij}{\textbf{X}}^{\lambda_j}\right)=0$, so
$h_i\sum\limits_{j=1}^{s_{{i_{\text{max}}i}}}b_{i_{\text{max}}ij}{\textbf{X}}^{\lambda_j}\in\VR_K[\textbf{X}]$. Combining this with
\eqref{eq:tildeIcontractstoI}, \eqref{eq:generatorsItilde}, \eqref{eq:bdgmodI1KX}, \eqref{eq:inI} and \eqref{eq:totalibdgofg}, we obtain
\[
h_i\sum\limits_{j=1}^{s_{{i_{\text{max}}i}}}b_{i_{\text{max}}ij}{\textbf{X}}^{\lambda_j}\in\mathcal I,
\]
as desired. This completes the proof of the inclusion $\supset$ in \eqref{eq:I1+I2}.

Let us prove the inclusion $\subset$. Take an element $f\in\mathcal I$. Fix an $i\in I^*$ such that $f\in\VR_K[\textbf{X}_{\le i}]$. We have
\begin{equation}
\nu_i\left(f_{\tilde{\bf Q}}\right)\ge0.\label{eq:nuifpositive}
\end{equation}
By \eqref{eq:tildeIcontractstoI} and \eqref{eq:generatorsItilde},
$f^{\text{tred}}\in(g(X_0))K[X_0]=\left(g^{(i)}\right)K[X_0]$. Write $f^{\text{tred}}=\left(g^{(i)}\right)^nr$, where
\[
r\in K[X_0],
\]
$n\in\N_{>0}$ and $g(X_0)$ does not divide $r$ in $K[X_0]$. We have
\begin{equation}
\nu_i\left(g^{(i)}(x)\right)=0\label{eq:nuigi>0}
\end{equation}
by definitions. From \eqref{eq:nuifpositive} and \eqref{eq:nuigi>0} we obtain
\begin{equation}
\nu_i(r(x))\ge0.\label{eq:nuir>0}
\end{equation}
By \eqref{eq:nuigi>0} (resp. \eqref{eq:nuir>0}) and Remark \ref{5remarks} (5), applied to $g^{(i)}$ (resp. to $r$) instead of $f$, we have $\mu_0\left(g^{(i)}_i\right)=0$ (resp. $\mu_0(r)\ge0$). Thus $g^{(i)}_i,r_i\in\VR_K[{\bf X}_{\le i}]\subset\VR_K[\bf X]$ and hence
\begin{equation}
\left(g_i^{(i)}\right)^nr_i\in\VR_K[{\bf X}_{\le i}]\subset\VR_K[\bf X].\label{eq:ginrinOKX}
\end{equation}
By \eqref{eq:fcongtotredmodI} and \eqref{eq:bdgmodI1KX}, we have
\begin{equation}
f_i\equiv\left(g_i^{(i)}\right)^nr_i\mod\mathcal I_1K[X].\label{eq:fcongginrimodI1}
\end{equation}
By \eqref{eq:fcongginrimodI1} and the uniqueness of the total $i$-building involving a fixed set of the variables $X_\ell$, $f_i$ is also the total $i$-building of $\left(g_i^{(i)}\right)^nr_i$.

From \eqref{eq:ginrinOKX} and Remark \ref{5remarks} (2), applied separately to $f$ and to $\left(g_i^{(i)}\right)^nr_i$, we obtain $f_i\in\VR_K[\bf X]$ and $f\equiv f_i\equiv\left(g_i^{(i)}\right)^nr_i\mod\mathcal I_1$. Since $g_i^{(i)}\in\mathcal I_2$, this proves that $f\in\mathcal I_1+\mathcal I_2$, which is what we wanted to show.
\end{proof}
\begin{Lema}\label{Propcrucial}
For $f\in\VR_K[\textbf{X}]$, if $f\in \mathcal I$, then $\frac{d}{dx}\left(f_{\tilde{\textbf{Q}}}\right)\in I_\beta$.
\end{Lema}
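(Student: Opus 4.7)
My plan is to exploit the explicit description $\mathcal{I}=\mathcal{I}_1+\mathcal{I}_2$ provided by Lemma \ref{generatorsCalI}, so as to rewrite $f_{\tilde{\mathbf{Q}}}$ as an explicit multiple of $g$ in $K[x]$ and then apply the Leibniz rule. Write $f=f_1+f_2$ with $f_j\in\mathcal{I}_j$. Each generator of $\mathcal{I}_1$ has the form $b_{\ell i}\bigl(X_\ell-\sum_j b_{\ell ij}\mathbf{X}^{\lambda_j}\bigr)$, and by construction $\sum_j b_{\ell ij}\tilde{\mathbf{Q}}^{\lambda_j}$ is precisely the full $i$-th $\tilde{\mathbf{Q}}$-expansion of $\tilde Q_\ell$. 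Substituting $\mathbf{X}=\tilde{\mathbf{Q}}$ annihilates every such generator, so $\mathbf{e}(f_1)=0$ and $f_{\tilde{\mathbf{Q}}}=\mathbf{e}(f_2)$.

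Next, I would expand $f_2=\sum_{i\in S}\phi_i\cdot h_i\sum_j b_{i_{\max}ij}\mathbf{X}^{\lambda_j}$ with $\phi_i\in\VR_K[\mathbf{X}]$ and $S\subset I^*$ finite. Because the inner sum is, for each $i\in S$, the full $i$-th expansion of $g$, its image under $\mathbf{e}$ is $g$, so
\[
f_{\tilde{\mathbf{Q}}}=g\cdot H,\qquad H:=\sum_{i\in S}h_i\,\phi_i(\tilde{\mathbf{Q}})\in K[x].
\]
The Leibniz rule then gives $\frac{d}{dx}\bigl(f_{\tilde{\mathbf{Q}}}\bigr)=g'H+gH'$, whose image in $L=K[x]/(g)$ is $g'(\eta)H(\eta)$.

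The final step is a lower bound for $v\bigl(g'(\eta)H(\eta)\bigr)$. For every $k\in I^*$ the choice of $a_k$ gives $v(\tilde Q_k(\eta))=\nu(Q_k)-v(a_k)=0$, so for any $\phi_i\in\VR_K[\mathbf{X}]$ we have $\phi_i(\tilde{\mathbf{Q}})(\eta)\in\VR_L$. Combined with $v(h_i)=-\nu_i(g)$, this yields $v\bigl(h_i\phi_i(\tilde{\mathbf{Q}})(\eta)\bigr)\ge -\nu_i(g)$ for each $i\in S$, and the ultrametric inequality produces some $i_0\in S$ with $v(H(\eta))\ge -\nu_{i_0}(g)$. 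Therefore
\[
v\bigl(g'(\eta)H(\eta)\bigr)\ge\nu(g')-\nu_{i_0}(g)=\beta_{i_0},
\]
placing $g'(\eta)H(\eta)\in I_{\beta_{i_0}}\subseteq I_\beta$, as desired.

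The substantive ingredient is already in place: Lemma \ref{generatorsCalI}. Granted the decomposition $\mathcal{I}=\mathcal{I}_1+\mathcal{I}_2$, the vanishing of the $\mathcal{I}_1$-contribution under $\mathbf{e}$ follows immediately from the defining shape of its generators, and the remaining argument is a routine ultrametric computation driven by the normalizations $v(h_i)=-\nu_i(g)$ and $v(\tilde Q_k(\eta))=0$. The only delicate point to keep in mind is to choose the coefficients $\phi_i$ inside $\VR_K[\mathbf{X}]$, which is automatic since the generators of $\mathcal{I}_2$ themselves lie in $\VR_K[\mathbf{X}]$.
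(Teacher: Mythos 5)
Your argument is correct, but it is a genuinely different route from the paper's. The paper proves Lemma \ref{Propcrucial} directly, without invoking Lemma \ref{generatorsCalI}: from $f\in\mathcal I$ it only uses that $g$ divides $f_{\tilde{\textbf{Q}}}$ in $K[x]$, writes $f_{\tilde{\textbf{Q}}}=g^rh$ with $(g,h)=1$, disposes of $r>1$ at once, and for $r=1$ chooses a truncation index $i$ with $\nu_i(h)=\nu(h)$ and $\nu_i\left(f_{\tilde{\textbf{Q}}}\right)\ge0$ (using only that $f$ has coefficients in $\VR_K$ and $\nu\left(\tilde Q_k\right)=0$), obtaining $\nu(g'h)\ge\nu(g')-\nu_i(g)$. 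You instead route everything through the explicit decomposition $\mathcal I=\mathcal I_1+\mathcal I_2$, observe that $\mathcal I_1\subset\ker({\bf e})$ (which the paper records as $\mathcal I_1K[\textbf{X}]=\ker({\bf e})$), so $f_{\tilde{\textbf{Q}}}={\bf e}(f_2)=gH$ with $H=\sum_{i\in S}h_i\phi_i(\tilde{\textbf{Q}})$, and then bound $v(H(\eta))$ using $v(h_i)=-\nu_i(g)$, $v\left(\tilde Q_k(\eta)\right)=0$ and $\phi_i\in\VR_K[\textbf{X}]$; the ultrametric inequality over the finite set $S$ gives $v\left(g'(\eta)H(\eta)\right)\ge\beta_{i_0}$, which is a valid proof. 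The trade-off: your proof makes the lemma depend on Lemma \ref{generatorsCalI} and hence on the standing hypothesis that $Q_\ell$ is $Q_i$-monic for $i<\ell$ (which is needed there but is not part of the statement of Lemma \ref{Propcrucial}), whereas the paper's proof is hypothesis-free and independent of the generator computation --- a point that matters since the authors announce they intend to drop the monic assumption in future work; within the proof of Theorem \ref{mainthm}, where that hypothesis is in force and Lemma \ref{generatorsCalI} is already available, your version is perfectly adequate and has the merit of exhibiting the cofactor $H$ of $g$ explicitly rather than appealing to divisibility and a choice of truncation index. One small slip of notation: ${\bf e}$ of the inner sum $\sum_jb_{i_{\text{max}}ij}\textbf{X}^{\lambda_j}$ is $g$ itself (as used in the paper via $g^{(i)}_i$), not literally ``the full $i$-th expansion of $g$'' as an element of $K[\textbf{X}]$; your computation uses it correctly, so this does not affect the argument.
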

\begin{proof}
Assume that $f\in \mathcal I$ so that $f_{\tilde{\textbf{Q}}}=0$. This implies that
\[
f_{\tilde{\textbf{Q}}}=g(x)^rh(x)\mbox{ for some }h\in K[x], r\in\N\mbox{ and }(g,h)=1.
\]
If $r>1$, then $\frac{d}{dx}(f_{\tilde{\textbf{Q}}})=0$ and we are done. Assume that $r=1$.

Since there are only finitely many $\tilde{Q}_i$'s appearing in the expression of $f$, there exists $i\in I$ such that
\begin{equation}\label{eqagajundaumpouco}
\nu(h)=\nu_i(h)\mbox{ and }\nu_i(f_{\tilde{\textbf{Q}}})\geq 0.
\end{equation}
Take $b\in K$ such that $\nu_i(g)=b$ and write
\[
f_{\tilde{\textbf{Q}}}=\frac{g}{b}\cdot(bh).
\]
This, together with \eqref{eqagajundaumpouco} gives us that $\nu(bh)\geq 0$.

Since
\[
\frac{d}{dx}(f_{\tilde{\textbf{Q}}})=\frac{g'}{b}\cdot (bh)+g\cdot\frac{d}{dx}(bh)=\frac{g'}{b}\cdot (bh)
\]
we deduce that
\[
\nu\left(\frac{d}{dx}(f_{\tilde{\textbf{Q}}})\right)\geq \nu(g')-b=\nu(g')-\nu_i(g).
\]
Consequently, $\frac{d}{dx}(f_{\tilde{\textbf{Q}}})\in I_\beta$.
\end{proof}

\begin{proof}[Proof of Theorem \ref{mainthm}]
From now on, we identify
\[
\Omega_{\VR_L/\VR_K}=\frac{\bigoplus\limits_{i\in I}\VR_LdX_i}{\mathcal J}
\]
where
\begin{equation}
\mathcal J=(df_j\mid j\in J).\label{eq:generatorsofJ}
\end{equation}
Consider the map
\begin{displaymath}
\begin{array}{ccccc}
\Psi&:&\Omega_{\VR_L/\VR_K}&\lra &I_\alpha/I_\beta\\
&&\displaystyle\sum_{l=1}^s b_ldX_l+\mathcal J&\longmapsto&\displaystyle\sum_{l=1}^s b_l\frac{d}{dx}(\tilde Q_l)+I_\beta
\end{array}.
\end{displaymath}
For each $i\in I$, since $\tilde Q_i=Q_i/a_i$ we obtain
\begin{equation}\label{equafundamental}
\frac{d}{dx}(\tilde Q_i)=\frac{d}{dx}\left(\frac{Q_i}{a_i}\right)=\frac{Q_i'}{a_i}
\end{equation}
and since $\nu(Q_i)=\nu(a_i)$ we obtain that $\frac{d}{dx}(\tilde Q_i)\in I_\alpha$. This, the fact that $I_\alpha$ is an ideal and the fundamental inequality imply that
\[
\sum_{l=1}^s b_l\frac{d}{dx}(\tilde Q_l)\in I_\alpha.
\]
An element of $\mathcal J$ is an $\VR_L$-linear combination of elements of the form $df_j$, $j\in J$. We have $f_j\in\mathcal I$ for all $j\in J$. Hence, by Lemma \ref{Propcrucial}, $\Psi$ is well-defined. It remains to show that it is bijective.

Take any $h\in I_\alpha$. Then there exists $i\in I$ such that $\nu(h)\geq\nu(Q'_i)-\nu(Q_i)$. This implies that
\[
\nu\left(\frac{a_ih}{Q'_i}\right)=\nu(a_i)+\nu(h)-\nu(Q_i')\geq 0.
\]
Set $\tilde h=\frac{a_ih}{Q'_i}\in \VR_L$. Then
\[
\Psi(\tilde hdX_i+\mathcal J)=\frac{a_ih}{Q'_i}\frac{Q'_i}{a_i}+I_\beta=h+I_\beta.
\]
Hence, $\Psi$ is surjective.

To see that $\Psi$ in injective, note that, by Lemma \ref{generatorsCalI} and \eqref{eq:generatorsofJ}, we have
\begin{equation}\label{eq:generatorsCalJ}
\mathcal J=\mathcal J_1+\mathcal J_2,
\end{equation}
where
\begin{equation}
\mathcal J_1=\left(\left.b_{\ell i}\left(X_\ell-\sum\limits_{j=1}^{s_{\ell i}}b_{\ell ij}{\textbf{X}}^{\lambda_j}\right)\ \right|\  i,\ell\in I\setminus\{i_{\text{max}}\},i<\ell\right)\label{eq:generatorsCalI1}
\end{equation}
and
\[
\mathcal J_2=\left(\left.h_id\left(\sum\limits_{j=1}^{s_\ell}b_{i_{\text{max}}ij}{\textbf{X}}^{\lambda_j}\right)\ \right|\ i\in I\setminus\{i_{\text{max}}\}\right).
\]
By \eqref{eq:generatorsCalI1}, every element of $\bigoplus\limits_{i\in I\setminus\{i_{\text{max}}\}}\VR_LdX_i$ is congruent modulo
$\mathcal J_1$ to a unique element of the form $cdX_0$ where $c\in\mathcal I_\alpha$. In particular, the $i$-th generator 
\[
h_id\left(\sum\limits_{j=1}^{s_\ell}b_{i_{\text{max}}ij}{\textbf{X}}^{\lambda_j}\right)
\]
 of $\mathcal J_2$ satisfies
\begin{equation}
h_id\left(\sum\limits_{j=1}^{s_\ell}b_{i_{\text{max}}ij}{\textbf{X}}^{\lambda_j}\right)\equiv h_ig'(\eta)\mod\mathcal J_1.\label{eq:congmodJ1}
\end{equation}
Now, suppose that $\Psi(cdX_0+\mathcal J)\in I_\beta$. Then there exists $i\in I$ such that
\[
v(c)\ge v(g'(\eta))-\nu_i(g).
\]
By \eqref{eq:minusthevalue}, we have $cdX_0=ah_ig'(\eta)dX_0$ for some $a\in\VR_L$. By \eqref{eq:congmodJ1}, we obtain
\[
cdX_0\in\mathcal J,
\]
hence $\Psi$ is injective, as desired. This completes the proof of the Theorem.
\end{proof}

\section{The classification}
In order to present the proof of Theorem \ref{charatheorem} we will need the following result.
\begin{Lema}\label{lamamilagroso}
Let $\Lambda\subseteq \Gamma$ be a final segment of $\Gamma$ without a least element. Let $\Delta$ be the largest isolated subgroup for which $\Lambda-\Delta=\Lambda$. Then for every $\epsilon>\Delta$ there exists $\lambda_0\in \Lambda$ such that
\[
\lambda_0-\lambda<\epsilon\mbox{ for every }\lambda\in \Lambda.
\]
Moreover, if $\Lambda'\subseteq \Lambda$ is a coinitial set, then $\lambda_0$ can be chosen in $\Lambda'$.
\end{Lema}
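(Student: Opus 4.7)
The plan is to argue by contradiction. Suppose there is some $\epsilon > \Delta$ (in particular $\epsilon > 0$, since $0 \in \Delta$) for which the desired $\lambda_0$ does not exist. Unpacking the negation, for every $\lambda_0 \in \Lambda$ there is a $\lambda \in \Lambda$ with $\lambda_0 - \lambda \ge \epsilon$, i.e.\ $\lambda \le \lambda_0 - \epsilon$. Since $\Lambda$ is a final segment, this forces $\lambda_0 - \epsilon \in \Lambda$; thus $\Lambda - \epsilon \subseteq \Lambda$. The whole job is then to turn this inclusion into a violation of the maximality of $\Delta$.

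To do this, I would introduce the stabilizer
\[
\Delta' := \{\gamma \in \Gamma \mid \Lambda + \gamma = \Lambda\},
\]
which is clearly a subgroup of $\Gamma$ and contains $\Delta$: indeed the hypothesis $\Lambda - \Delta = \Lambda$, combined with $\Delta$ being a group, is easily seen to be equivalent to the assertion that $\Lambda + \delta = \Lambda$ for every $\delta \in \Delta$. The main technical point, which I expect to be the hardest step, is to verify that $\Delta'$ is isolated (i.e.\ convex): given $0 \le \gamma' \le \gamma$ with $\gamma \in \Delta'$, one uses the final segment property of $\Lambda$ to check $\Lambda + \gamma \subseteq \Lambda + \gamma' \subseteq \Lambda$, which collapses to $\Lambda + \gamma' = \Lambda$ as soon as $\Lambda + \gamma = \Lambda$. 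Thus $\Delta'$ is an isolated subgroup satisfying $\Lambda - \Delta' = \Lambda$, and the maximality of $\Delta$ forces $\Delta' = \Delta$.

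To finish, note that $\Lambda - \epsilon \subseteq \Lambda$ is equivalent to $\Lambda \subseteq \Lambda + \epsilon$, while $\epsilon > 0$ and the final segment property give $\Lambda + \epsilon \subseteq \Lambda$; together these give $\Lambda + \epsilon = \Lambda$, i.e.\ $\epsilon \in \Delta' = \Delta$, contradicting $\epsilon > \Delta$. For the ``moreover'' part, any $\lambda_0' \in \Lambda$ with $\lambda_0' \le \lambda_0$ automatically satisfies $\lambda_0' - \lambda \le \lambda_0 - \lambda < \epsilon$ for every $\lambda \in \Lambda$, so if $\Lambda' \subseteq \Lambda$ is coinitial one simply picks such a $\lambda_0' \in \Lambda'$ below the $\lambda_0$ already produced.
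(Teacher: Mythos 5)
Your proof is correct, but it is a genuinely different argument from the one in the paper. The paper works directly: given $\epsilon>\Delta$ it introduces $\Delta_1$, the smallest isolated subgroup containing $\epsilon$, which strictly contains $\Delta$; maximality of $\Delta$ then produces $\gamma\in\Lambda$ and a minimal $n\in\N_{>0}$ with $\Lambda>\gamma-n\epsilon$, and the no-least-element hypothesis is used to pick $\lambda_0\in\Lambda$ with $\gamma-n\epsilon<\lambda_0<\gamma-(n-1)\epsilon$, which visibly satisfies $\lambda_0-\lambda<\epsilon$ for all $\lambda\in\Lambda$. You instead identify $\Delta$ intrinsically as the stabilizer $\Delta'=\{\gamma\in\Gamma\mid\Lambda+\gamma=\Lambda\}$: your verifications that $\Delta'$ is a convex subgroup containing $\Delta$ and satisfies $\Lambda-\Delta'=\Lambda$ are all correct, so maximality gives $\Delta'=\Delta$, and the failure of the conclusion for $\epsilon$ translates (via the final-segment property and $\epsilon>0$) into $\Lambda+\epsilon=\Lambda$, i.e.\ $\epsilon\in\Delta$, a contradiction. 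Your route buys a cleaner conceptual picture (it shows in passing that the largest such $\Delta$ exists and equals the stabilizer of $\Lambda$), it dispenses with the archimedean-type argument involving multiples $n\epsilon$, it never uses the hypothesis that $\Lambda$ has no least element (which is harmless, since with a least element the conclusion is trivial), and you treat the ``moreover'' clause explicitly, which the paper leaves implicit. What the paper's argument buys is a more constructive output: it locates $\lambda_0$ in an explicit interval $(\gamma-n\epsilon,\gamma-(n-1)\epsilon)$ rather than deriving existence by contradiction.
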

\begin{proof}
Take an $\epsilon$ as in the statement of the lemma. Let $\Delta_1$ denote the smallest isolated subgroup of $\Gamma$ containing
$\epsilon$. We have $\Delta\subsetneqq\Delta_1$. By the maximality assumption on $\Delta$, there exists $\gamma\in \Lambda$ and $n\in\N_{>0}$ such that
\begin{equation}
\Lambda>\gamma-n\epsilon\label{eq:S>gamma-nepsilon}
\end{equation}
(otherwise $\Delta_1$ would satisfy the very condition with respect to which $\Delta$ was supposed to be maximal).
Take the smallest $n\in\N_{>0}$ satisfying \eqref{eq:S>gamma-nepsilon}. By assumption, $\Lambda$ has no minimal element; in particular,
$\gamma-(n-1)\epsilon\ne\min \Lambda$. Hence $\Lambda$ contains an element $\lambda_0$ such that $\gamma-n\epsilon<\lambda_0<\gamma-(n-1)\epsilon$. This $\lambda_0$ satisfies the conclusion of the lemma.
\end{proof}

\begin{proof}[Proof of Therorem \ref{charatheorem}]
Assume first that $\alpha$ has a minimal element. If conditions (a)--(c) hold then $\nu_i(g')=\nu(g')$ and the desired equality
$\alpha=\beta$ follows immediately.

Conversely, assume that $\alpha=\beta$. Since
\begin{equation}
\alpha\le\tilde\beta\le\beta,\label{eq:alpha<betatilde<beta}
\end{equation}
both inequalities in \eqref{eq:alpha<betatilde<beta} are equalities. In particular, the segment $\tilde\beta$ also has a minimal element.

Now, for all $i\in I^*$ we have the inequalities
\begin{equation}
\min\,\alpha\le
\min\limits_{\ell\in I_0(g,i)}\left\{\nu\left(Q'_\ell\right)-\nu(Q_\ell)\right\}\le
\nu_i(g')-\nu_i(g)\le \nu(g')-\nu_i(g).\label{eq:sandwich}
\end{equation}
Consider an $i\in I^*$ such that
\begin{equation}
\tilde\beta_i=\min\,\tilde\beta=\min\,\alpha.\label{eq:iminimal}
\end{equation}
The first equality in \eqref{eq:iminimal} says precisely that condition (b) is satisfied for this $i$. Moreover, the first two inequalities in \eqref{eq:sandwich} are equalities. In other words, condition (c) also holds for this $i$.

If for each $i\in I^*$ satisfying \eqref{eq:iminimal} the last inequality in  \eqref{eq:sandwich} were strict, we would have $\beta>\alpha$, a contradiction. Hence, there exists an $i\in I^*$ satisfying \eqref{eq:iminimal} for which the last inequality in \eqref{eq:sandwich} is also an equality, in other words,
\begin{equation}
\nu_i(g')=\nu(g').\label{eq:nug'attained}
\end{equation}

Fix an $i\in I^*$ satisfying \eqref{eq:iminimal} and \eqref{eq:nug'attained}.  To complete the proof of the theorem in the case when $\alpha$ contains a minimal element, it remains to prove that $i=\max\,I^*$.

To do this, we argue by contradiction. Assume that $i\ne\max\,I^*$ and consider
$\ell\in I^*$ such that $\ell>i$. We have $\nu_\ell(g')=\nu(g')=\nu_i(g')$ and
$\nu_\ell(g)>\nu_i(g)$, contradicting condition (b). This completes the proof in the case that $\alpha$ has a minimal element.

Assume now that $\alpha$ does not have a minimal element. Suppose that there exists $I'$ satisfying the conditions (a) and (b) of Theorem \ref{charatheorem}. We want to show that for every $i\in I$ there exists $j\in I'$ such that $\beta_j<\alpha_i$.

Observe that condition (a) implies that for every $i\in I$ there exists a cofinal subset $I'_i$ of $I'$ such that
\begin{equation}\label{cofinalserqusp}
\tilde{\beta}_k<\alpha_i\mbox{ for every }k\in I'_i.
\end{equation}

Fix $i\in I$. If $\alpha_i+\Delta$ is not the minimal element of $\overline\alpha$, then there exists $i_-\in I$ such that
\[
\alpha_{i_-}<\alpha_i-\delta\mbox{ for every }\delta\in\Delta.
\]
This means that $\epsilon:=\alpha_i-\alpha_{i_-}>\Delta$. By (b), there exists $j_0\in I'$ such that
\[
\beta_j-\tilde{\beta}_j=\nu(g')-\nu_j(g')<\epsilon\mbox{ for every }j\in I'\mbox{ with }j\geq j_0.
\]
By \eqref{cofinalserqusp} there exists a cofinal subset $I_{i_-}$ of $I'$ such that
\[
\tilde{\beta}_k<\alpha_{i_-}\mbox{ for every }k\in I_{i_-}.
\]
Take $j\in I_{i_-}$ such that $j>j_0$. Then we have
\[
\beta_j<\tilde{\beta}_j+\epsilon<\alpha_{i_-}+\epsilon=\alpha_i.
\]

If $\alpha_i+\Delta$ is the minimal element of $\overline\alpha$, then, in particular, $\overline\alpha$ has a minimal element. By assumption (b), there exists $j_0\in I'$ such that
\[
\epsilon:=\beta_j-\tilde{\beta}_j=\nu(g')-\nu_j(g')\in \Delta\mbox{ for every }j\in I', j\geq j_0.
\]
Hence there exists $i_-\in I$ such that
\[
\alpha_{i_-}<\alpha_i-\epsilon.
\]
Taking $j>j_0$ with $\tilde{\beta}_j<\alpha_{i_-}$ we obtain that
\[
\beta_j<\alpha_i.
\]

For the converse, assume that $\alpha=\beta$. Set
\[
\mathcal S=\{i\in I\mid \alpha\neq \alpha(\{\beta_j\mid j\geq i\})\}.
\]
If $\mathcal S=\emptyset$, then $I'=I^*$ will satisfy the conditions (a) and (b). Indeed, since $\deg(g')<\deg(g)$ there exists $j_0\in I'$ such that
\begin{equation}\label{equatuinparaiconsinaofinal}
\nu_j(g')=\nu(g')\mbox{ for every }j\in I', j\geq j_0.
\end{equation}
In particular, condition (b) is satisfied. On the other hand, condition \eqref{equatuinparaiconsinaofinal} implies that
$\tilde{\beta}_j=\beta_j$ for every $j\geq j_0$. This and the fact that $\mathcal S=\emptyset$ imply that
\[
\beta=\alpha\left(\left\{\left.\tilde{\beta}_j\ \right|\  j\in I'\right\}\right).
\]
Hence, (a) is satisfied.

Suppose now that $\mathcal S\neq \emptyset$. Since $I$ is well-ordered, there exists a smallest element $i_0\in \mathcal S$. Set
\[
I'=I_{<i_0}:=\{i\in I\mid i<i_0\}.
\]
For every $i\in I'$ we have $i<i_0$ and consequently $i\notin \mathcal S$. Since $\tilde{\beta}_i\leq\beta_i$ for every $i\in I$, this implies that
\[
\beta=\alpha\left(\left\{\left.\tilde{\beta}_j\ \right|\  j\in I'\right\}\right).
\]
Hence, (a) is satisfied. It remains to show (b).

If $\overline\alpha$ has a smallest element $\alpha_{i_0}+\Delta$, then the fact that
\[
\alpha=\alpha(\{\beta_j\mid j\in I'\})=\alpha(\{\tilde{\beta}_j\mid j\in I'\}).
\]
implies that for large enough $j\in I'$ we have
\[
\nu(g')-\nu_j(g')=\beta_j-\tilde{\beta}_j=(\beta_j-\alpha_{i_0})-(\tilde{\beta}_j-\alpha_{i_0})\in\Delta.
\]
Hence (b) is satisfied.

If $\overline\alpha$ does not have a smallest element, then take any $\epsilon>\Delta$. Since
\[
\alpha=\alpha(\{\beta_j\mid j\in I'\})=\alpha(\{\tilde{\beta}_j\mid j\in I'\}),
\]
by Lemma \ref{lamamilagroso} applied to $\Lambda=\alpha$ and $\Lambda'=\{\beta_j\}_{j\in I'}$, there exists $j_0\in I'$ such that
\[
\beta_{j_0}-\tilde{\beta}_j<\epsilon\mbox{ for every }j\in I'.
\]
In particular,
\[
\nu(g')-\nu_{j_0}(g')=\beta_{j_0}-\tilde{\beta}_{j_0}<\epsilon.
\]
Since $\{\nu_i(g')\}_{i\in I}$ is increasing this implies that $\nu(g')=\ws(\nu_i(g'))$ and this concludes our proof.
\end{proof}
\subsection{The case of unique plateau}\label{caseoplateu}
We present now a proof for Proposition \ref{Propbunitinhamans}.

\begin{proof}[Proof of Proposition \ref{Propbunitinhamans}]
Set $d=\deg(g)$. Take a cofinal well-ordered (with respect to $\nu$) subset $\textbf{Q}_1=\{x-a_i\}_{i\in I^*}$ of $\{x-a\mid a\in K\}$. Since $\nu$ does not admit key polynomials of degree $n$ for every $n$, $1< n< d$, we deduce that $g$ is a polynomial of smallest degree which is $\textbf{Q}_1$-unstable. In this particular case, $\textbf{Q}_1\cup \{g\}$ is a complete sequence of key polynomials for $\nu$. In particular, every polynomial in this sequence is $(x-a_i)$-monic for every $i\in I^*$ and hence we can apply Theorem \ref{mainthm}.

For each $i\in I^*$ we have
\[
\alpha_i=\nu\left(\frac{d }{dx}(x-a_i)\right)-\nu(x-a_i)=\nu(1)-\nu(x-a_i)=-\nu(x-a_i).
\]

Since $g$ is a polynomial of smallest degree which is $\textbf{Q}_1$-unstable, there exists $i_0\in I^*$ such that
\[
\nu(g')=\nu_i(g')=\nu(g'(a_i))\mbox{ for every }i\in I^*\mbox{ with }i\geq i_0. 
\]
Hence,
\[
\beta_i=\nu(g')-\nu_i(g)\mbox{ for every }i\in I^*\mbox{ with }i\geq i_0.
\]
In particular, $\{\beta_i\}_{i\in I^*}$ is ultimately decreasing.

By Proposition \ref{Propprimeiroplau}, $\{\alpha_i\}_{i\in I^*}$ is ultimately decreasing. Since $\{\beta_i\}_{i\in I^*}$ is also ultimately decreasing, there exists a final set $I_{0}\geq i_0$ of $I^*$ such that $\{\alpha_i\}_{i\in I_0}$ and $\{\beta_i\}_{i\in I_0}$ are decreasing.

For each $i\in I^*$, the $x-a_i$-expansion of $g$ is given by
\[
g=g(a_i)+g'(a_i)(x-a_i)+\ldots+(x-a_i)^d.
\]
Then the condition $\alpha=\beta$ is equivalent to the fact that for every $i\in I_0$, there exists $j\in I_0$ such that
\begin{equation}\label{equacbonia}
\nu(g'(a_i))-\nu_j(g)=\nu(g')-\nu_j(g)=\beta_j<\alpha_i=-\nu(x-a_i).
\end{equation}
This happens if and only if
\[
\nu\left(g'(a_i)(x-a_i)\right)<\nu_j(g).
\]
This shows that $\Omega_{\VR_L/\VR_K}=\{0\}$ if and only if $1\in B_1$.
\end{proof}

We apply the above result to a particular case. Suppose that $(L/K,v)$ is a defect extension of degree $p$ (this is the case treated in \cite{CKR}). Since the extension has defect, by the defect formula \cite[Theorem 6.14]{Nov19} $\textbf{Q}_1$ is a plateau. Hence, the only numbers $n\in\N$ for which $\textbf{Q}_n\neq\emptyset$ are $n=1$ and $n=p$. Hence, $\Omega_{\VR_L/\VR_K}=\{0\}$ if and only if $1\in B_1$. By \cite[Proposition 1.3]{Nov21} this is equivalent to $(L/K,v)$ being independent (in the cases when this concept is defined). 

Let us compute $\alpha$ and $\beta$ explicitly in two particular cases. For simplicity of exposition we will assume that ${\rm rk}(v)=1$. 

Assume that $(L/K,v)$ is an Artin-Schreier extension and $g=x^p-x-a$. Then $g'=-1$ and $\nu(g')=0$. Setting
\[
a_n=\sum_{i=0}^n a^{\frac{1}{p^i}}
\]
we take $\textbf{Q}_1=\{x-a_n\}_{n\in\N}$.  Moreover, it is easy to show that
\[
\alpha_n=-\frac{v(a)}{p^n}\mbox{ and }\beta_n=-\frac{v(a)}{p^{n-1}}.
\]
Consequently, $\alpha=\beta$ (i.e., $\Omega_{\VR_L/\VR_K}=\{0\}$) if and only if
\[
p\cdot {\rm dist}(\eta,K)={\rm dist}(\eta,K).
\]
This is precisely the criterion for this extension to be independent (as in \cite{Kuhl}). Also, in this case it is easy to see that $1\in B_1$ if and only if ${\rm dist}(\eta,K)=0^-$.

Assume now that $(L/K,v)$ is a Kummer extension and $g=x^p-a$ with $v(a)=0$. Let $\textbf{Q}_1=\{x-a_n\}_{n\in\N}$. Then $g'=px^{p-1}$, so that
\[
\nu(g')=v(p)+(p-1)\nu(x)=v(p).
\]
One can show that for large enough $n\in\N$ we have 
\[
\nu_n(g)=p\nu(x-a_n)
\]
and consequently
\[
\beta_n=v(p)-p\nu(x-a_n).
\]
Set
\[
\gamma=\sup_{n\in\N}\{\nu(x-a_n)\}=\sup_{a\in K}\{\nu(x-a)\}=\sup {\rm dist}(\eta,K).
\]
Then we have $\alpha=\beta$ if and only if
\[
\inf_{n\in\N}\alpha_n=\inf_{n\in\N}\beta_n.
\]
This happens if and only if
\[
-\gamma=\inf_{n\in\N}\{-\nu(x-a_n)\}=\inf_{n\in\N}\alpha_n=\inf_{n\in\N}\beta_n=v(p)-p\gamma.
\]
This shows that $\Omega_{\VR_L/\VR_K}=\{0\}$ if and only if $\gamma=\frac{v(p)}{p-1}$ and this is again the criterion for this extension to be independent (as in \cite{CKR}).


\begin{thebibliography}{99}

\bibitem{CKR} S. D. Cutkosky, F.-V. Kuhlmann and  A. Rzepka, \textit{Characterizations of Galois extensions with independent defect}, arXiv:2305.10022 (2023).

\bibitem{Nov12} S. D. Cutkosky and J. Novacoski, \textit{Essentially finite generation of valuation rings in terms of classical invariants}, Math. Nachrichten \textbf{294} (2021), 15--37.

\bibitem{D} J. Decaup, \textit{Simultaneous Monomialization}, Contemporary Mathematics, \textbf{1 (5)} (2020), 272--346.

\bibitem{SPatal} F. J. Herrera Govantes, W. Mahboub, M. A. Olalla Acosta and M. Spivakovsky, \textit{Key polynomials for simple extensions of valued fields}, Journal of Singularities \textbf{25} (2022), 197--267.

\bibitem{Kapl} I. Kaplansky, \textit{Maximal fields with valuations I}, Duke Math. Journ. \textbf{9}
(1942), 303--321.

\bibitem{Kuhl} F.-V. Kuhlmann, \textit{A classification of Artin-Schreier defect extensions and characterizations of defectless fields}, Illinois J. Math. \textbf{54} (2) (2010), 397--448.

\bibitem{Mat}  H.\,Matsumura, \textit{Commutative\,Algebra}, Benjamin/Cummings Publishing Co., Reading, Mass (1970).

\bibitem{Nov19} E. Nart and J. Novacoski, \textit{The defect formula}, Adv. Math. \textbf{428} (2023), 109153.

\bibitem{Nov21} J. Novacoski, \textit{A characterization for the defect of rank one valued field extensions}, arXiv:2210.09157 (2022).

\bibitem{Novspivkeypol} J. Novacoski and M. Spivakovsky, \textit{Key polynomials and pseudo-convergent sequences}, J. Algebra \textbf{495} (2018), 199--219.
\end{thebibliography}
\end{document}